\documentclass{amsart}

\usepackage[margin=30truemm]{geometry}
\usepackage[utf8]{inputenc}

% %enumerateの番号付け
% \renewcommand{\labelenumi}{(\arabic{enumi})}

\usepackage{amsthm}
\usepackage{amsmath}
\usepackage{amsfonts}
\usepackage{amssymb}
\usepackage{amscd}
\usepackage{mathrsfs}
\usepackage{txfonts}
\usepackage{tikz}
\usepackage{ascmac}
\usepackage{multirow}
\usepackage{graphicx}

% new ones
% \usepackage{appendix}
\usepackage{mathtools}
\usepackage{cleveref}
\usepackage{url}
\usepackage{tikz-cd}
\usepackage{enumitem}
\usepackage{subcaption}
\usepackage{afterpage}
\usepackage{changepage}
\usepackage{longtable}
\usepackage{bm}
\usepackage{multicol}
\usepackage{wrapfig}

\usepackage{lineno}
% \linenumbers

% Theorem Styles
\theoremstyle{plain}
    
    \newtheorem{theorem}{Theorem}[section]
    \newtheorem{proposition}[theorem]{Proposition}
    \newtheorem{corollary}[theorem]{Corollary}
    \newtheorem{lemma}[theorem]{Lemma}
    \newtheorem{claim}[theorem]{Claim}
    
    \newtheorem{conjecture}[theorem]{Conjecture}

\theoremstyle{definition}
    \newtheorem{definition}[theorem]{Definition}

    \newtheorem{example}[theorem]{Example}
    \newtheorem{question}[theorem]{Question}

\theoremstyle{remark}
	\newtheorem{remark}[theorem]{Remark}%

\crefname{maintheorem}{Theorem}{Theorems}
\crefname{claim}{Claim}{Claims}

\newcommand{\R}{\mathbb{R}}
\newcommand{\Z}{\mathbb{Z}}

\newcommand{\id}{\mathop{\mathrm{id}}\nolimits}

\newcommand{\Int}{\mathrm{Int}\,}

\title{
    On the classification of 2-plat 2-knots
% Normal forms of 2-plat 2-knots and their Alexander polynomials
}

\author{
    Jumpei Yasuda
}

\date{
    \today
}

\address{Department of Mathematics, Graduate School of Science, Osaka Metropolitan University, Osaka 558‐8585, JAPAN}
\email{j.yasuda@omu.ac.jp}
\keywords{2-knot, 2-dimensional braid, Alexander polynomial}

% \subjclass is required.
\subjclass[2020]{Primary 57K45, Secondary 57K10}

\begin{document}
\maketitle

\begin{abstract}
An $n$-plat 1-knot is one isotopic to the plat closure of some $2n$-braid, which is also called an $n$-bridge 1-knot.
Schubert classified 2-bridge 1-knots by considering their double branched covers which are homeomorphic to lens spaces.
A 2-knot is a 2-sphere smoothly embedded in 4-space or 4-sphere.
An $n$-plat 2-knot is one isotopic to the plat closure of some 2-dimensional $2n$-braid.

The aim of this paper is to classify 2-plat 2-knots.
By a result of Montesinos, double branched covers do not distinguish 2-plat 2-knots.
Thus, we introduce a new invariant to classify them.
Our invariant serves as an analogue of a torsion invariant.
Furthermore, it is an obstruction to invertibility of 2-knots.
% As applications, we discuss invertibility and fiberedness of 2-plat 2-knots.
\end{abstract}

\section{Introduction}\label{Section: Introduction}
The \textit{plat closure} of a $2n$-braid $\beta$, denoted by $\tilde{\beta}$, is the 1-knot or 1-link in $\R^3$ obtained by attaching $2n$ arcs to its endpoints, as shown in \Cref{fig:Dig-PlatClosure}.
A 1-knot is called \textit{$n$-plat} if it is ambiently isotopic to the plat closure of some $2n$-braid.
Every 1-plat 1-knot is trivial.
2-plat 1-knots are also known as 2-bridge 1-knots, and they were first studied by Bankwitz and Schumann \cite{Bankwitz-Schumann1934} as \textit{Viergeflechte} (\textit{4-plats}).

In \cite{Schubert1954}, Schubert introduced a normal form $S(p, q)$ of 2-bridge 1-knots for coprime integers $p,q \in \Z$, where $p > 0$ is odd.
Moreover, he gave a complete classification of 2-bridge 1-knots as follows:
$S(p, q)$ and $S(p', q')$ are ambiently isotopic if and only if the following hold: (1) $p = p'$ and (2) $q' \equiv q^{\pm 1} \mod p$.
This corresponds to the classification of lens spaces $L(p, q)$ arising as double covers of $S^3 = \R^3 \cup \{\infty\}$ branched along $S(p, q)$.

\begin{figure}[h]
    \centering
    \includegraphics[width = 0.5\hsize]{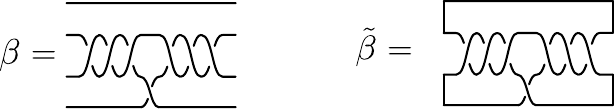}
    \caption{The plat closure $\tilde{\beta}$ of a $4$-braid $\beta$.}
    \label{fig:Dig-PlatClosure}
\end{figure}

A \textit{2-knot} is a 2-sphere $S^2$ smoothly embedded in $\R^4$.
Two 2-knots are said to be \textit{equivalent} if they are smoothly isotopic in $\R^4$.
We work in either the smooth category or the piecewise linear (PL) category.
Surfaces embedded in $\R^4$ are assumed to be locally flat in the PL category.

In \cite{Yasuda21}, the author introduced the plat closures of 2-dimensional braids as a higher-dimensional analogue of the plat closures of classical braids (\Cref{Subsection:Plat closure of 2-dim braid}).
Every 2-knot is equivalent to the plat closure of some 2-dimensional braid.
A 2-knot $F$ is called \textit{$n$-plat} if it is equivalent to the plat closure of some 2-dimensional $2n$-braid.
Then, every 1-plat 2-knot is trivial (\cite[Theorem 5.5]{Yasuda21}), and every 2-plat 2-knot is ribbon (\Cref{2-plat 2-knot is ribbon}, see also \cite[Theorem 5.7]{Yasuda21}).

The aim of this paper is to classify oriented 2-plat 2-knots.
To this end, we introduce normal forms of oriented 2-plat 2-knots and define a new invariant $\tau(F)$ of oriented 2-knots.

\subsection{Main results}
Let $p$ and $q$ be coprime integers, where $p > 0$ is odd.
Take a continued fraction expansion $[c_1, c_2, \dots, c_m]$ of $q/p$.
An oriented ribbon 2-knot $F(p, q)$ is defined by a ribbon presentation as shown in \Cref{fig:Intro-1} with a canonical orientation (\Cref{Subsection: normal forms of 2-plat 2-knots}), where each box in the diagram has horizontal $|c_i|$ positive crossings if $c_i \geq 0$ and $|c_i|$ negative crossings if $c_i < 0$, respectively.
The equivalence class of $F(p,q)$ is independent of the choice of a continued fraction expansion (\Cref{Well-definedness of normal form for 2-plat 2-knots}).
Then, the family $F(p, q)$ gives a normal form for 2-plat 2-knots:

\begin{theorem}\label{Main: normal form for 2-plat 2-knot}
    Every oriented 2-plat 2-knot is equivalent to $F(p, q)$ for some coprime $p,q \in \Z$ with $p>0$ odd.
    Conversely, for any coprime $p, q \in \Z$ with $p > 0$ odd, $F(p,q)$ is an oriented 2-plat 2-knot.
\end{theorem}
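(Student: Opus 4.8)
The plan is to prove the two implications separately, both via a dictionary between $2$-dimensional $4$-braids and their plat closures on the one hand and the classical rational-tangle and continued-fraction calculus on the other, parallel to the fact that the plat closure of an ordinary $4$-braid is the $2$-bridge link built from a rational tangle. I would work with the braid systems of $2$-dimensional braids (equivalently, their charts) and with ribbon presentations, using \Cref{2-plat 2-knot is ribbon} to pass freely between a plat closure of a $2$-dimensional $4$-braid and a ribbon presentation of the resulting surface, and the material of \Cref{Subsection: braid system} and \Cref{Subsection: 2-bridge knots and rational tangles} to match combinatorial data with the rational number $a/p$.

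\textbf{$F(a/p)$ is $2$-plat.} Fix $a/p$ with $p$ odd and a continued fraction expansion $[c_1,\dots,c_m]$ with $m$ odd. I would first assemble a $2$-dimensional $4$-braid $\mathcal{B}=\mathcal{A}_1^{c_1}\mathcal{A}_2^{c_2}\mathcal{A}_1^{c_3}\cdots\mathcal{A}_1^{c_m}$ as an alternating product of two elementary ``twisting'' $2$-dimensional $4$-braids $\mathcal{A}_1,\mathcal{A}_2$, the higher-dimensional analogues of the horizontal and vertical twists generating rational tangles, the odd length $m$ forcing the word to begin and end with $\mathcal{A}_1$. I would then compute the plat closure $\widetilde{\mathcal{B}}$ directly from the braid data: away from the branch points one recovers the usual construction of the rational tangle $\mathcal{T}(a/p)$, and hence of $K(a/p)$, while the branch points of the braid give rise to the births, deaths and saddles of circles in the motion picture of $\widetilde{\mathcal{B}}$ which, reassembled, form exactly the trivial $2$-spheres and connecting tubes of the ribbon presentation of \Cref{fig:Intro-1}; comparing these descriptions tube-by-tube identifies $\widetilde{\mathcal{B}}$ with $F(a/p)$. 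The same picture shows that $\widetilde{\mathcal{B}}$ is connected and of genus zero, so $F(a/p)$ is a $2$-knot. (This argument also uses the well-definedness of $F(a/p)$, i.e.\ its independence of the chosen odd-length continued fraction expansion, which I would obtain from invariance of the ribbon presentation under the elementary moves $[\dots,c,0,c',\dots]\mapsto[\dots,c+c',\dots]$ and overall sign reversal, as in \Cref{Subsection: 2-bridge knots and rational tangles}.)

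\textbf{Every $2$-plat $2$-knot is some $F(a/p)$.} Let $K=\widetilde{\mathcal{B}}$ be the plat closure of a $2$-dimensional $4$-braid $\mathcal{B}$. Using moves on the braid system of $\mathcal{B}$ together with the extra moves the plat closure permits — sliding parts of $\mathcal{B}$ past the caps and cups, the analogue in this setting of passing to a double coset modulo the cup and cap stabilizers in the classical theory of plats — I would reduce $\mathcal{B}$, without changing the equivalence class of $\widetilde{\mathcal{B}}$, to the alternating normal form $\mathcal{A}_1^{c_1}\mathcal{A}_2^{c_2}\cdots\mathcal{A}_1^{c_m}$ with $c_i\in\Z$, inserting a trivial factor $\mathcal{A}_i^{0}$ if necessary so that $m$ is odd. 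Setting $a/p=[c_1,\dots,c_m]$, the computation of the previous step gives $K=F(a/p)$. Finally, $p$ must be odd: the number of components of the surface produced by the ribbon presentation of \Cref{fig:Intro-1} is read off directly and equals $1$ exactly when $p$ is odd, just as $K(a/p)$ is a knot exactly when $p$ is odd; since $K$ is connected, $p$ is odd.

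\textbf{Main obstacle.} The crux is the normalization in the second implication: showing that every $2$-dimensional $4$-braid can be brought, modulo plat-closure equivalence, to the alternating twist normal form. Classically this is the substantive assertion that plat closures of $4$-braids are precisely $2$-bridge links; here the braid-system calculus is considerably more flexible, so the work is in checking that nothing survives the reduction beyond the sequence of twist exponents — equivalently, that the relevant double-coset space of $2$-dimensional $4$-braids is parametrized by continued fractions. A secondary technical point, needed for the statement to be meaningful, is the well-definedness of $F(a/p)$ under change of continued fraction expansion, which I would settle by the rational-tangle moves noted above.
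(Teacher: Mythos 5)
There is a genuine gap at exactly the point you flag as the ``main obstacle'': the normalization of an arbitrary $2$-dimensional $4$-braid, modulo plat-closure equivalence, to an alternating twist form $\mathcal{A}_1^{c_1}\mathcal{A}_2^{c_2}\cdots\mathcal{A}_1^{c_m}$ is asserted but never argued, and as set up it does not make sense. A $2$-dimensional braid is a branched covering surface over the disk $D^2_2$; one with no branch points is equivalent to the trivial one (the monodromy of $\pi_1(D^2_2)=1$ is trivial), so there are no nontrivial ``elementary twisting'' $2$-dimensional $4$-braids $\mathcal{A}_1,\mathcal{A}_2$ to multiply, while factors carrying branch points would accumulate branch points and destroy the Euler characteristic $\chi(\tilde S)=4-m$ needed for a $2$-sphere. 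Moreover the plat closure here attaches $w_2\times\partial D^2_2$ along the entire boundary circle, so there is no top/bottom stabilizer and no analogue of the classical Birman--Hilden double-coset calculus to invoke; the twisting data lives in the conjugating braid of the braid system, not in a product decomposition of the surface braid. So the reduction you describe is not merely unproved, it is aimed at a structure that is absent in this setting.

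The missing idea is the one the paper uses to bypass all of this. For a $2$-knot plat closure of a $2$-dimensional $4$-braid, $\chi=2$ forces exactly two branch points, so the braid system is $(b_1,b_1^{-1})$ and, after slide equivalence, $b_1=\beta^{-1}\sigma_1\beta$ for some $\beta\in B_4$; this is the content of \Cref{2-plat 2-knot is ribbon} and yields, via the symmetric motion picture, the ribbon presentation of \Cref{Ribbon presentation of 2-plat 2-knot} consisting of a tangle $T$ (determined by $\beta$ acting on $w_2$), its mirror, and a single band. Since $T$ is a trivial $2$-strand tangle, Conway's classification identifies it with a rational tangle $T(a/p)$, and hence the $2$-knot with $F(a/p)$; connectivity then forces $p$ odd. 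Your first implication and the parity remark are fine (in the paper that direction is immediate, since $F(a/p)$ is defined as a plat closure), and your observation that well-definedness under change of continued fraction is needed is correct and is handled in \Cref{Well-definedness of normal form for 2-plat 2-knots} by rational-tangle equivalence; but without the Euler-characteristic/braid-system step and the appeal to the rational-tangle classification, the surjectivity half of the theorem is not established by your outline.
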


\begin{figure}[h]
    \begin{minipage}{0.45\hsize}
        \centering
        \includegraphics[width = \hsize]{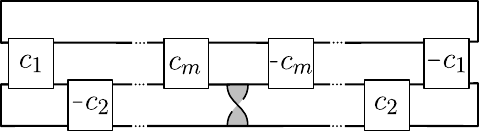}
        \caption{A ribbon presentation of a 2-plat 2-knot $F(p, q)$.}
        \label{fig:Intro-1}
    \end{minipage}
    \quad
    \begin{minipage}{0.45\hsize}
        \centering
        \includegraphics[width = \hsize]{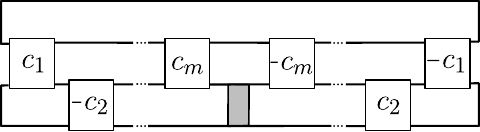}
        \caption{A ribbon presentation of a spun 2-knot $\sigma K(p, q)$.}
        \label{fig:Intro-2}
    \end{minipage}
\end{figure}

We remark that $F(p,q)$ consists of a 2-sphere and a Klein bottle if $p$ is even, and $F(1,q)$ is a trivial 2-knot.

We now turn to the classification of oriented 2-plat 2-knots.
We prove in \Cref{2-plat 2-knots modulo p} that $F(p, q)$ and $F(p,q')$ are equivalent if $q \equiv q' \pmod p$.
Hence, we may assume that $q$ is even.
Then, we obtain a formula for the Alexander polynomial $\Delta_{F}(t)$ of oriented 2-plat 2-knots:

\begin{proposition}\label{Main: Formula of Alexander polynomials for 2-plat 2-knots}
    Suppose that $p$ is odd and $q$ is even.
    Then, we have
    \begin{align*}
        \Delta_{F(p,q)}(t) ~\doteq~ 1 + \sum_{i=1}^{p-1} (-1)^i t^{\, d(i)}, \quad d(i) = \sum_{k=1}^{i} (-1)^{\lfloor kq/p \rfloor}.
    \end{align*}
    Here, $\lfloor x \rfloor$ is the integer part of $x$, and $f(t) \doteq g(t)$ means that $f(t) = \pm t^{n} g(t)$ for some $n \in \Z$.
\end{proposition}

\begin{corollary}\label{Determinant of 2-plat 2-knot}
    The determinant of $F(p,q)$ is equal to $p$.
    In particular, $F(p,q)$ and $F(p',q')$ are equivalent only if $p = p'$.
\end{corollary}

\begin{proof}
    % Let $F = F(p,q)$ be a 2-plat 2-knot.
    It is known that the determinant of a 2-knot $F$ is equal to $|\Delta_F(-1)|$.
    In addition, the equality $d(i) \equiv i \pmod 2$ holds for each $i$.
    Therefore, we have
    \begin{align*}
        \left|\Delta_F(-1)\right| ~=~ \left|1 + \sum_{i=1}^{p-1} (-1)^i(-1)^{d(i)}\right| ~=~ p.
    \end{align*}
\end{proof}

Since 2-plat 2-knots are ribbon 2-knots of 1-fusion (\Cref{2-plat 2-knot is ribbon}), their double branched covers do not distinguish among 2-plat 2-knots with the same denominator:

\begin{theorem}[\cite{Montesinos1978}]\label{Fact:Montesinos1978}
    Suppose $S^4 = \R^4 \cup \{\infty\}$ so that 2-knots are embedded in $S^4$.
    Then, double covers of $S^4$ branched along $F(p,q)$ and $F(p',q')$ are diffeomorphic if and only if $p = p'$.
\end{theorem}

Hence, we cannot apply the strategy of classifying 2-bridge 1-knots to oriented 2-plat 2-knots.
To overcome this problem, we introduce new invariants for a 2-knot $F$:
\begin{align*}
    a(F) ~=~ \sum_{k \in \Z} |a_k|, \quad \tau(F) ~=~ \sum_{k \in \Z} k \, |a_k| \pmod{a(F)},
\end{align*}
where $\Delta_F(t) = \sum_{k \in \Z}a_k\, t^k$.
Although the Alexander polynomial $\Delta_F(t)$ is defined only up to multiplication by $\pm t^n$, these are well-defined (\Cref{well-definedness of tau}).

\begin{theorem}\label{Main:Formula of tau invariant}
    Suppose that $p$ is odd and $q$ is even.
    Then, we have
    \begin{align*}
        a(F(p,q)) ~=~ p, \quad \tau(F(p,q)) ~\equiv~ (2q)^{-1} \pmod p.
        % \begin{cases}
        %     (2q)^{-1} \pmod p & (q \neq 0),\\
        %     0 \pmod 1 & (q = 0).
        % \end{cases}
    \end{align*}
\end{theorem}

The proof is given in \Cref{Section:tau invariant}.
Using this invariant, we establish the following classification:

\begin{theorem}\label{Main:Classification of 2-plat 2-knots}
    Let $F(p, q)$ and $F(p', q')$ be oriented 2-plat 2-knots.
    Then, the following are equivalent:
    \begin{enumerate}
        \item $F(p,q)$ and $F(p',q')$ are equivalent.
        \item $p = p'$ and $q \equiv q' \pmod{p}$ hold.
    \end{enumerate}
\end{theorem}

\begin{proof}
    The case (2) $\Rightarrow$ (1) is obtained by the definition of normal forms and by \Cref{2-plat 2-knots modulo p}.
    Thus, it remains to prove that (1) implies (2).

    Suppose that $F(p,q)$ and $F(p',q')$ are equivalent.
    Then, $p = p'$ holds by \Cref{Determinant of 2-plat 2-knot}.
    Since $F(p,q)$ and $F(p,q+p)$ are equivalent (\Cref{2-plat 2-knots modulo p}), we may assume that $q$ is even.
    % We may replace $q$ (or $q'$) with $q + p$ (or $q'+p$), if necessary, so that $q$ (or $q'$) is even, respectively.
    By \Cref{Main:Formula of tau invariant}, we have
    \begin{align*}
        (2q)^{-1} \equiv (2q')^{-1} \pmod{p}.
    \end{align*}
    Hence, we have $q \equiv q' \pmod{p}$.
\end{proof}

It follows from the proof of the classification that the Alexander polynomial is a complete invariant for oriented 2-plat 2-knots.
This is somewhat surprising, because the Alexander polynomial is far from being a complete invariant for general ribbon 2-knots or even for 2-bridge 1-knots in view of the following facts:
\begin{itemize}
    \item Many distinct 2-bridge 1-knots share the same Alexander polynomial (\cite{Kanenobu-Sumi1993}).
    \item The Alexander polynomial of any 2-knot is realized by that of some ribbon 2-knot (\cite{Kinoshita1961}, see also \Cref{fact:Kinoshita1961}).
    \item There exist infinitely many distinct ribbon 2-knots with trivial Alexander polynomials (\cite{Kanenobu-Sumi2018}, see also \Cref{fact:Kanenobu-Sumi2018}).
\end{itemize}

% The Alexander polynomial $\Delta_F(t)$ is called \textit{reciprocal} if $\Delta_F(t) \doteq \Delta_F(t^{-1})$.
A Laurent polynomial $f(t)$ is called \textit{reciprocal} if $f(t) \doteq f(t^{-1})$.
If $F$ has a reciprocal Alexander polynomial, then $\tau(F) \equiv 0$ by definition.
In particular, the invariant $\tau(K)$ is trivial for every 1-knot $K$.
%  $\Delta_K(t)$ is reciprocal.
% Since every 1-knot has a reciprocal Alexander polynomial, the invariant $\tau(K)$ is trivial for any 1-knot $K$.
% Furthermore, $\tau(F)$ obstructs invertibility of 2-knots:
Furthermore, $\tau(F)$ gives an obstruction to the invertibility of 2-knots:
% we obtain an obstruction to invertibility of 2-knots:

\begin{theorem}
    For an invertible 2-knot $F$, we have $\tau(F) \equiv 0 \pmod{a(F)}$.
\end{theorem}

\begin{proof}
    It follows from the fact that $\Delta_{-F}(t) \doteq \Delta_F(t^{-1})$.
\end{proof}

\begin{corollary}\label{Main:2-plat 2-knot is non-invertible}
    Every non-trivial 2-plat 2-knot is non-invertible.
\end{corollary}

\begin{proof}
    Let $F(p,q)$ be a non-trivial 2-plat 2-knot, that is, with $p > 1$ and $|q| > 1$.
    Since $2q \in \Z/p\Z$ is invertible, we have $\tau(F(p,q)) \equiv (2q)^{-1} \not\equiv 0 \pmod p$ by \Cref{Main:Formula of tau invariant}.
    Thus, $F(p,q)$ is non-invertible.
\end{proof}

We mention that it is not easy to detect the non-reciprocity of $\Delta_{F(p,q)}(t)$ from \Cref{Main: Formula of Alexander polynomials for 2-plat 2-knots} because $\Delta_{F(p,q)}(t)$ is decomposed into monomials in the formula.

Next, we compare 2-plat 2-knots with spun 2-knots (\cite{Artin1925-spun}).
For a 1-knot $K$, let $\sigma K$ denote its spun 2-knot.
The spun 2-knot of a 2-bridge 1-knot $K(p,q)$ has a ribbon presentation as shown in \Cref{fig:Intro-2}, where $q/p = [c_1, \dots, c_m]$.
Thus, $F(p,q)$ and $\sigma K(p,q)$ have similar ribbon presentations.
This leads to the following question:

\begin{question}
    Is a 2-plat 2-knot equivalent to the spun 2-knot of some 2-bridge 1-knot?
\end{question}

% We have the following theorem by comparing $\tau(F)$:

\begin{theorem}
    No non-trivial 2-plat 2-knot is equivalent to a spun 2-knot:
    \begin{align*}
        \left\{F(p,q) \mbox{: 2-plat 2-knots}\right\} \cap \{\sigma K\mbox{: spun 2-knots}\} ~=~ \{F(1,0)\mbox{: a trivial 2-knot}\}.
    \end{align*}
\end{theorem}

\begin{proof}
    For a 1-knot $K$, Artin \cite{Artin1925-spun} showed that the fundamental groups of the complements of $K$ and $\sigma K$ are isomorphic.
    Since the Alexander polynomial is determined by the knot group, $\Delta_{\sigma K}(t)$ is reciprocal.
    Hence, $\tau(\sigma K) \equiv 0$ holds.
    Therefore, $\tau(F)$ distinguishes 2-plat 2-knots from spun 2-knots.
\end{proof}

In the study of 2-bridge 1-knots, the parameter $q$ in $K(p,q)$ is also called the \textit{torsion}, in the sense that it is detected by the Reidemeister torsion of $L(p,q)$.
Therefore, our invariant $\tau(F)$ may capture torsion-like information for closed 4-manifolds.

\begin{question}
    What is the geometric meaning of $\tau(F)$?
\end{question}

Finally, we focus on fibered 2-plat 2-knots.
Computer calculations yield the following proposition:
\begin{proposition}\label{Computer calculation of 2-plat 2-knots 2}
    Every 2-plat 2-knot $F(p,q)$ with $1 < p \leq 2000$ has a non-monic Alexander polynomial.
    Here, a monic polynomial means that the coefficients of the highest- and lowest-degree terms are $\pm 1$.
\end{proposition}

A fibered ribbon 2-knot of 1-fusion has a monic Alexander polynomial (\cite{Kanenobu-Sumi2020,Yoshikawa1981}).
Hence, \Cref{Computer calculation of 2-plat 2-knots 2} leads to the following conjecture:

\begin{conjecture}
    Every non-trivial 2-plat 2-knot is non-fibered.
\end{conjecture}

This paper is organized as follows:
In \Cref{Section:Preliminaries}, we recall the notions of plat closures of 2-dimensional braids and ribbon surface-links.
In \Cref{Section: Normal forms of 2-plat 2-knots}, we introduce normal forms of oriented 2-plat 2-knots.
In \Cref{Section: Alexander polynomials of 2-plat 2-knots}, we compute the Alexander polynomials and $\tau(F)$ of oriented 2-plat 2-knots.
In the appendix, we list 2-plat 2-knots $F(p,q)$ with $p \leq 19$ in \Cref{table: 2-plat 2-knots}, together with their ribbon types and normalized Alexander polynomials.

\medskip
\noindent
\textbf{Acknowledgements.}
This work was supported by JSPS KAKENHI Grant Numbers JP22J20494, JP25K23341 and by Research Fellowship Promoting International Collaboration, The Mathematical Society of Japan.

\section{Plat closure of 2-dimensional braids and ribbon surface-links}\label{Section:Preliminaries}
A \textit{surface-knot} is a connected closed surface embedded in $\R^4$, and a \textit{surface-link} is a union of disjoint surface-knots.
An orientable surface-link is called \textit{trivial} if it bounds mutually disjoint handlebodies embedded in $\R^4$.
In this section, we recall the notions of plat closures of 2-dimensional braids, braid systems, and ribbon surface-links.
%  that will be used in the definition of normal forms of 2-plat 2-knots.

\subsection{The plat closure of 2-dimensional braids}\label{Subsection:Plat closure of 2-dim braid}
Let $D^2$ be the unit disk in $\R^2$, and let $X_n$ ($n \geq 1$) be a fixed subset of $n$ interior points in $D^2$ lying on the x-axis.
A (\textit{geometric}) \textit{$n$-braid} is a disjoint union $\beta$ of $n$ curves properly embedded in $D^2 \times I$ such that
\begin{enumerate}
    \item[(b1)] the restriction map $\pi_\beta := p_2|_\beta: \beta \to I$ is a covering map of degree $n$, and
    \item[(b2)] $\partial \beta = X_n \times \partial I$,
\end{enumerate}
where $I = [0,1]$ and $p_2: D^2 \times I \to I$ is the projection onto the second factor.
Two $n$-braids are said to be \textit{equivalent} if they are related by an isotopy of $D^2 \times I$ relative to the boundary.
For a geometric braid $\beta$, we sometimes denote its equivalence class by the same symbol $\beta$.
The \textit{braid group} $B_n$ is the group of equivalence classes of $n$-braids.
In \Cref{Subsection: braid system}, we identify $B_n$ with the fundamental group $\pi_1(\mathcal{C}_n, X_n)$ of the configuration space $\mathcal{C}_n$ of $n$ interior points in $D^2$.
% In \Cref{Subsection: Schubert's normal forms of 2-bridge knots}, we identify $B_n$ with the compactly supported mapping class group $\mathcal{M}(\R^2, X_n)$.
See \cite[Section 1]{Kamada2002_book} for details of this identification.

Let $\beta$ be a $2n$-braid, and let $w_n$ be the union of $n$ disjoint line segments in $D^2$ such that $\partial w_n = X_{2n}$.
The \textit{plat closure} of $\beta$, denoted by $\tilde{\beta}$, is the link $\beta \cup (w_n \times \partial I)$ in $\R^2\times \R^1 = \R^3$.
Every link is equivalent to the plat closure of some braid.

A \textit{2-dimensional $n$-braid} (or a \textit{surface $n$-braid}) \cite{Kamada2002_book,Viro90} is a compact surface $S$ properly embedded in a bidisk $D^2_1\times D^2_2$ such that
\begin{enumerate}
    \item[(B1)] the restriction map $\pi_S := \mathrm{pr}_2|_S: S \to D^2_2$ is a simple branched covering map of degree $n$, and
    \item[(B2)] $\partial S = X_n \times \partial D^2_2$,
\end{enumerate}
where $D^2_i$ ($i = 1,2$) are 2-disks in $\R^2$ and  $\mathrm{pr}_i: D^2_1 \times D^2_2 \to D^2_i$ is the projection onto the $i$-th factor ($i = 1, 2$).
Here, a branched covering map $\pi: X \to Y$ of degree $n$ is said to be \textit{simple} if $|\pi^{-1}(y)| \in \{n, n-1 \}$ for each $y \in Y$.
A compact surface $S$ properly embedded in $D^2_1 \times D^2_2$ satisfying (B1) is called a \textit{braided surface} \cite{Rudolph1983}.
Two 2-dimensional braids are said to be \textit{equivalent} if
there exist isotopies $\{H_t\}_{t \in I}$ of $D^2_1 \times D^2_2$ relative to $\partial (D^2_1 \times D^2_2)$ and $\{h_t\}_{t \in I}$ of $D^2_2$ such that $H_0 = \id$, $H_1$ carries one to the other, and $\mathrm{pr}_2\circ H_t = h_t \circ \mathrm{pr}_2$ holds for each $t \in I$.
See \cite{Kamada2002_book, Kamada2017_book} for details.

\begin{definition}
    Let $S$ be a 2-dimensional $2n$-braid and $A = w_n \times \partial D^2_2$.
    The \textit{plat closure} of $S$, denoted by $\tilde{S}$, is the (possibly non-orientable) unoriented surface-link $S \cup A$ in $D^2_1 \times D^2_2 \subset \R^4$.
\end{definition}

\begin{lemma}
    If two 2-dimensional $2n$-braids are equivalent, then their plat closures are equivalent.
\end{lemma}

\begin{proof}
    This follows from the definition of the equivalence for 2-dimensional braids.
\end{proof}

\begin{theorem}[\cite{Yasuda21}]\label{Fact:Plat closure of 2-dim braid}
    Every orientable surface-link is equivalent to the plat closure of some 2-dimensional braid.
\end{theorem}

\begin{remark}
    \Cref{Fact:Plat closure of 2-dim braid} can be generalized to arbitrary (possibly non-orientable) surface-links by using plat closures of braided surfaces (\cite{Yasuda21}).
\end{remark}

If $\pi_S$ has $m$ branch points and the degree of $\pi_S$ is $n$, then the Euler characteristic $\chi(\tilde{S})$ is equal to $2n-m$.

Let $S$ be a 2-dimensional $n$-braid, and let $p_1: \R^4 = \R^3 \times \R^1 \to \R^3$ be the projection onto the first factor $\R^3$.
We identify $D^2_2$ with $I \times [-2,2]$.
For $t \in [-2,2]$, define $S_{[t]} = p_1(S \cap (D^2_1 \times I \times \{t\}))$.
Then, $S_{[t]}$ is an $n$-braid, possibly with singular points.
We call $\{S_{[t]}\}_{t\in [-2,2]}$ a \textit{motion picture} of $S$.
A motion picture of $S$ is illustrated by a sequence of slices $S_{[t]}$ as shown in \Cref{fig:MoPi-2dimBraid}.
% For instance, let $S$ be a 2-dimensional 4-braid described in \Cref{fig:MoPi-2dimBraid}.
The plat closure of $S$ has a motion picture illustrated in \Cref{fig:MoPi-PlatClosure1}.

\begin{figure}[h]
    \centering
    \includegraphics[height = 20mm]{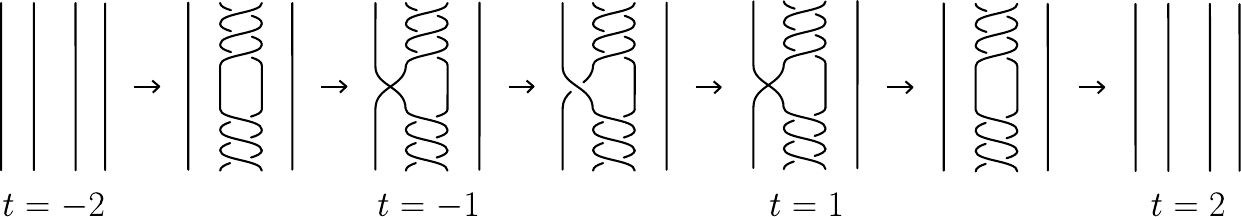}
    \caption{A motion picture of a 2-dimensional 4-braid $S$.}
    \label{fig:MoPi-2dimBraid}
\end{figure}

\begin{figure}[h]
    \centering
    \includegraphics[height = 20mm]{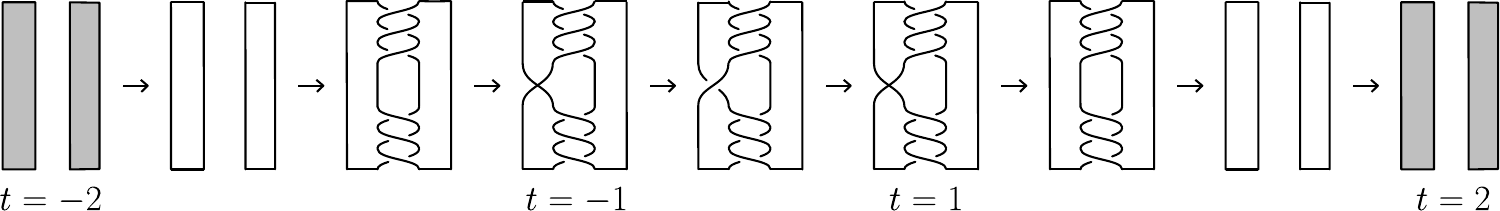}
    \caption{A motion picture of the plat closure $\tilde{S}$.}
    \label{fig:MoPi-PlatClosure1}
\end{figure}

\begin{definition}
    Let $n$ be a positive number.
    A surface-link is said to be \textit{$n$-plat} if it is equivalent to the plat closure of a 2-dimensional $2n$-braid.
\end{definition}

Every 1-plat surface-link is equivalent to a trivial surface-knot (\cite[Theorem~5.5]{Yasuda21}).
Moreover, for each $g \geq 0$ and $n \geq 2$, there exist infinitely many distinct surface-knots of genus $g$ such that they are $n$-plat but are not $(n-1)$-plat (\cite{Yasuda24}).

\begin{remark}
    For a 2-dimensional $2n$-braid $S$, each connected component $F$ of $\tilde{S}$ has the normal Euler number $e(F) = 0$  (\cite{Massey1969,Kamada2017_book}).
    However, the following converse question remains open:
    is every unoriented surface-link, each of whose components $F$ satisfies $e(F)= 0$, equivalent to the plat closure of some 2-dimensional braid?
\end{remark}

\subsection{Braid systems of 2-dimensional braids}\label{Subsection: braid system}

A \textit{braid system} is an ordered $r$-tuple $b = (b_1, \dots, b_r)$ of elements $b_i \in B_n$ such that each $b_i$ is conjugate to $\sigma_1$ or $\sigma_1^{-1}$, where $n \geq 1$, $r \geq 0$, and $\sigma_1, \dots, \sigma_{n-1}$ are Artin's standard generators of $B_n$ \cite{Artin1925-braid}.
The equivalence class of a 2-dimensional braid can be characterized in term of a braid system as follows (\Cref{Braid system2}).

Let $S$ be a 2-dimensional $n$-braid, and let $\Sigma(S) = \{ y_1, \dots, y_r\}$ be the branch locus of $\pi_S$.
Let $\mathcal{C}_n$ be the configuration space of $n$ interior points of $D^2_1$.
Choose a base point $y_0 \in \partial D^2_2$.
For a loop $c: (I,\partial I) \to (D^2_2\setminus \Sigma(S), y_0)$, a loop $\rho_S(c): (I,\partial I) \to (\mathcal{C}_n, X_n)$ is defined as $\rho_S(c)(t) = \mathrm{pr}_1(\pi_S^{-1}(c(t)))$.
Then this map induces a homomorphism $\rho_S: \pi_1(D^2_2\setminus \Sigma(S), y_0) \to B_n = \pi_1(\mathcal{C}_n, X_n)$ by $\rho_S([c]) = [\rho_S(c)]$, which is called the \textit{braid monodromy} of $S$.
See \cite{Kamada2002_book} for details.

A \textit{Hurwitz arc system} in $D^2_2$ (with the base point $y_0$) is an $r$-tuple $\mathcal{A} = (\alpha_1, \cdots, \alpha_r)$ of oriented simple arcs in $D^2_2$ such that
\begin{enumerate}
\item for each $i =1, \ldots, r$, $\alpha_i \cap \partial D^2_2 = \{y_0\}$, where $y_0$ is the terminal point of $\alpha_i$,
\item $\alpha_i \cap \alpha_j = \{y_0\}$ ($i\neq j$), and
\item $\alpha_1, \ldots,  \alpha_r$ appear in this order around the base point $y_0$.
\end{enumerate}
The set of initial points of $\alpha_1, \ldots, \alpha_r$ is called the \textit{starting point set} of $\mathcal{A}$.

Let $\mathcal{A} = (\alpha_1, \cdots, \alpha_r)$ be a Hurwitz arc system in $D^2_2$ with the starting point set $\Sigma(S)$.
Let $N_i$ be a small regular neighborhood of the starting point of $\alpha_i$, and let $\overline{\alpha_i}$ be an oriented arc obtained from $\alpha_i$ by restricting to $D^2_2\setminus \Int N_i$.
For each $i$, let $\gamma_i$ be a loop $\overline{\alpha_i}^{~-1} \cdot \partial N_i \cdot \overline{\alpha_i}$ in $D^2_2\setminus \Sigma(S)$ with a base point $y_0$, where $\partial N_i$ is oriented counter-clockwise.

\begin{definition}
    An $r$-tuple $b_S = (\rho_S([\gamma_1]), \dots, \rho_S([\gamma_r]))$ is called a \textit{braid system of $S$}.
\end{definition}

Since $\partial S$ is the trivial closed braid, we have $\prod_{i=1}^r \rho_S([\gamma_i]) = \rho_S([\partial S]) = 1$, where $1 \in B_n$ is the identity element of $B_n$.
Conversely, every braid system $b = (b_1, \dots, b_r)$ satisfying $\prod_{i=1}^r b_i = 1$ is a braid system of some 2-dimensional braid (\cite{Kamada2002_book,Rudolph1983}).

For a braid system $b = (b_1, \dots, b_r)$ and $i \in \{1,\dots, r-1\}$, we define new braid systems
\[
    b_+ = (b_1, \dots, b_{i-1},~
    b_i \, b_{i+1} \,b_i^{-1},~
    b_i,~
    b_{i+2}, \dots, b_r),
    \quad
    b_- = (b_1, \dots, b_{i-1},~
    b_{i+1},~
    b_{i+1}^{-1} \, b_{i} \,b_{i+1},~
    b_{i+2}, \dots, b_r).
\]
Then, $b_+$ and $b_-$ are said to be obtained from $b$ by a \textit{slide action}.
Two braid systems are \textit{slide equivalent} (or \textit{Hurwitz equivalent}) if one is obtained from the other by a finite sequence of slide actions.

\begin{theorem}[\cite{Kamada2002_book,Rudolph1983}]\label{Braid system2}
    Two 2-dimensional braids are equivalent if and only if their braid systems are slide equivalent.
\end{theorem}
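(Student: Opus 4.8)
The plan is to reproduce, in the setting of braided surfaces, the classical recovery of a branched covering from its monodromy data, following Rudolph and Kamada; there are three things to check. \textbf{(i) Well-definedness.} First I would verify that, for a fixed $2$-dimensional $n$-braid $S$, the slide-equivalence class of $b_S$ is independent of the chosen Hurwitz arc system $\mathcal{A}=(\alpha_1,\dots,\alpha_r)$ with starting point set $\Sigma(S)$ — this is implicit in the phrase ``their braid systems''. The input is that any two Hurwitz arc systems in $D^2_2$ with the same starting point set and base point $y_0\in\partial D^2_2$ are joined by a finite sequence of elementary moves, each interchanging two consecutive arcs $\alpha_i,\alpha_{i+1}$; such a move alters the loop system $(\gamma_1,\dots,\gamma_r)$ by a single Hurwitz move, hence alters $b_S=(\rho_S([\gamma_1]),\dots,\rho_S([\gamma_r]))$ by a slide action. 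Since $y_0$ lies on the fixed boundary $\partial D^2_2$ there is no ambiguity of base point, hence no residual global conjugation, and $\prod_i\rho_S([\gamma_i])=\rho_S([\partial D^2_2])=1$ because $\pi_S$ restricts to the trivial covering $X_n\times\partial D^2_2$.

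\textbf{(ii) Equivalent braids give slide-equivalent systems.} Let $\{\Phi_t\}_{t\in I}$ be an ambient isotopy of $D^2_1\times D^2_2$ fixing the boundary pointwise with $\Phi_0=\id$ and $\Phi_1(S)=S'$, and set $S_t=\Phi_t(S)$. A priori $\Phi_t$ ignores the product structure, so $S_t$ need not satisfy (B1); I would put $\{S_t\}$ in general position with respect to $\mathrm{pr}_2$, so that $S_t$ is a braided surface for all but finitely many $t$, the exceptional times being isolated tangency or birth--death events of branch points. Since $S$ and $S'$ have the same Euler characteristic they have the same number of branch points, so births and deaths are matched and can be cancelled; the residual motion is then a concatenation of fiber-preserving isotopies during which $\Sigma(S_t)$ moves in $D^2_2$ rel $\partial D^2_2$. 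Reading the braid monodromy against a fixed arc system through such a motion changes the braid system only by slide actions, so $b_{S'}$ and $b_S$ are slide equivalent. Concretely this is the chart/graphic move calculus for braided surfaces from the cited references, which I would invoke rather than rebuild. This step is the main obstacle: controlling an arbitrary boundary-fixing ambient isotopy that respects neither the fibration nor the braided form, and decomposing it into fiber-preserving stages plus cancelling vertical events, is exactly where the analytic/PL work lives.

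\textbf{(iii) Slide-equivalent systems come from equivalent braids.} Conversely it suffices to realize one slide action geometrically. A slide action corresponds to pushing one starting point $y_i$ of $\mathcal{A}$ once around an adjacent starting point along a small arc; by covering-space theory together with isotopy extension and the standard local model of a simple branch point, this isotopy of $D^2_2$ lifts to an ambient isotopy of $D^2_1\times D^2_2$ fixing the boundary and carrying $S$ to a braided surface whose braid system, read against $\mathcal{A}$, is $b_+$ or $b_-$. Iterating along a chain of slide actions, and using that a braid system determines the braided surface up to equivalence once the arc system is fixed (uniqueness of a simple branched covering with prescribed monodromy, together with the existence theorem above), one concludes that slide-equivalent braid systems are realized by equivalent $2$-dimensional braids, completing the proof.
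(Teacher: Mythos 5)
The paper does not prove this statement at all: it is quoted from the cited references (Rudolph 1983 and Kamada's book), so there is no in-paper argument to compare with, and your sketch has to be judged against the standard proof in those sources. Your step (i) is fine and is the standard well-definedness argument. The trouble begins in step (ii): you read ``equivalent'' as related by an \emph{arbitrary} boundary-fixing ambient isotopy and then try to restore condition (B1) by general position, matching and cancelling birth--death events of branch points. But the paper's definition (as in Kamada's book) is isotopy fixing the boundary pointwise \emph{as a 2-dimensional braid}, i.e.\ every intermediate surface is itself a 2-dimensional braid; the branch locus just moves in the interior of $D^2_2$, and the braid system changes by slide actions precisely when a branch point crosses an arc of the fixed Hurwitz arc system. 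So your detour is unnecessary, and as an argument it contains a genuine gap: equality of the number of branch points at the two ends does not allow you to cancel the birth--death events of a generic one-parameter family --- that is a Cerf-type cancellation statement that would need proof, and if you could carry it out you would be proving a strictly stronger theorem (arbitrary ambient isotopy rel boundary implies braid equivalence) than the one being cited. You flag this yourself as ``the main obstacle,'' but flagging it does not close it.

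In step (iii) the geometric realization of a single slide action by dragging one branch point around an adjacent one is the right picture, but the load-bearing claim --- that a 2-dimensional braid is determined up to equivalence by its braid system relative to a fixed Hurwitz arc system --- is exactly the uniqueness half of the Rudolph--Kamada theorem you are asked to prove. Appealing to ``uniqueness of a simple branched covering with prescribed monodromy'' does not suffice as stated: branched covering theory reconstructs the cover of $D^2_2$ abstractly, whereas here the monodromy takes values in $B_n=\pi_1(\mathcal{C}_n,X_n)$ and one must recover the \emph{embedded} braided surface in $D^2_1\times D^2_2$ up to fiber-preserving isotopy rel boundary (e.g.\ via the correspondence between braided surfaces and maps of $D^2_2\setminus\Sigma$ into the configuration space, or the chart description). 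Without that step the converse direction is essentially an appeal to the theorem itself. So the overall architecture matches the standard proof, but (ii) rests on a misreading plus an unjustified cancellation, and (iii) leaves the core uniqueness argument unproved.
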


We remark that \Cref{Braid system2} naturally generalizes to (pointed) braided surfaces (\cite{Kamada2002_book,Rudolph1983}).

\subsection{Ribbon surface-links}\label{Subsection:Ribbon 2-knots}
A (\textit{3-dimensional}) \textit{1-handle} $h$ attached to a surface-link $F$ is the image of an embedding $h: D^2 \times I \to \R^4$ such that $h(D^2\times I) \cap F = h(D^2\times \{0,1\})$, where $I = [0,1]$.
For a 1-handle $h$, we define the surface-link $F_h = F \cup \partial h \setminus \mathrm{int}(F \cap h)$, which is said to be obtained by \textit{surgery along $h$}.

When $F$ is oriented, we call $h$ \textit{oriented} if two orientations of $\partial h \cap F = h(D^2 \times \{0\}) \cup h(D^2\times \{1\})$ induced by the orientation of $F$ match on $\partial h$.
For an oriented 1-handle $h$, we assign $F_h$ the orientation induced by the orientation of $F$.

A \textit{2-link} is a union of disjoint 2-knots, and it is called \textit{trivial} if it bounds disjoint 3-balls embedded in $\R^4$.

\begin{definition}
    A \textit{ribbon} surface-link is one obtained from a trivial 2-link by surgery along 1-handles.
    In particular, a ribbon 2-knot is said to be \textit{of $m$-fusion} if it is obtained from an $(m+1)$-component trivial 2-link by surgery along $m$ 1-handles.
\end{definition}

Next, we recall ribbon presentations of ribbon surface-links.
% Let $L$ be an oriented trivial 1-link in $\R^3$.
A \textit{band} $b$ attached to a link $L$ is the image of an embedding $b: I \times I \to \R^3$ such that $b(I \times I) \cap L = b(I \times \{0,1\})$.
When $L$ is oriented, we call $b$ \textit{oriented} if two orientations of $b \cap L$ induced by the orientation of $L$ match on $\partial b$.

Let $L$ be a trivial 1-link, and let $B$ be a union of disjoint bands attached to $L$.
Let $D$ be a union of disjoint 2-disks in $\R^3$ such that $\partial D = L$.
% Let $D$ be a union of disjoint 2-disks in $\R^3$ such that $\partial D = L$, and let $B$ be a union of disjoint oriented bands attached to $L$.
% Define $F_0 = \partial (D \times [-2,2])$ and $H = B \times [-1,1]$.
Then $F_0 = \partial (D \times [-2,2])$ is a trivial 2-link in $\R^4 = \R^3 \times \R^1$ and $H = B \times [-1,1]$ is a union of disjoint 1-handles attached to $F_0$.
Thus, we obtain the ribbon surface-link $F(L,B)$ by
\begin{align*}
    F(L,B) ~=~ F_0 \cup \partial H \setminus \mathrm{int}( F_0 \cap H).
\end{align*}
When $L$ and $B$ are oriented, we obtain an oriented surface-link $F(L,B)$.
Furthermore, the equivalence class of $F(L, B)$ is independent of the choice of $D$.

We call $(L, B)$ a \textit{ribbon presentation} of a ribbon surface-link $F$ if $F(L, B)$ is equivalent to $F$.
Every ribbon surface-link has a ribbon presentation (\cite{Yajima1964}).
% Moreover, a ribbon 2-knot of $m$-fusion has a ribbon presentation $(L, B)$ consisting of an $(m+1)$-component link $L$ and a union $B$ of $m$ bands.

By definition, $F = F(L,B)$ has a motion picture $F_{[t]}$ ($t \in \R$) as shown in \Cref{fig:Example of motion picture of realizing surface} such that
\begin{enumerate}
    \item[(R1)] $F_{[t]}$ contains 2-disks only if $t = \pm 2$. (This implies that $F_{[t]} = \emptyset$ for any $|t| > 2$.)
    \item[(R2)] $F_{[t]}$ contains bands/saddle points only if $t = \pm 1$.
    \item[(R3)] $F_{[t]} = F_{[-t]}$ holds for any $t \in [0,2]$.
\end{enumerate}

\begin{figure}[h]
    \centering
    \includegraphics[width = \hsize]{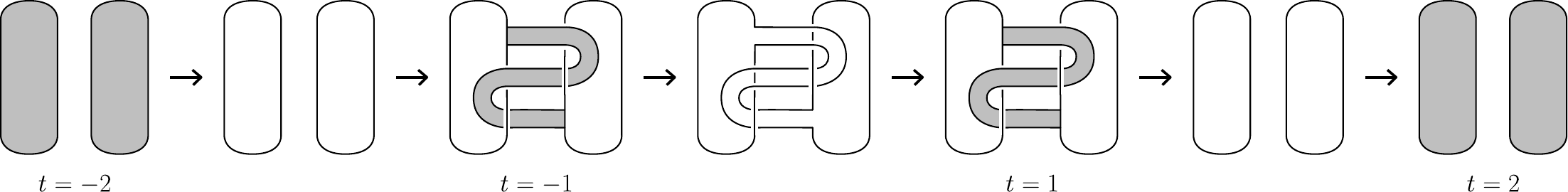}
    \caption{A motion picture of a ribbon surface-link $F(L,B)$.}
    \label{fig:Example of motion picture of realizing surface}
\end{figure}

Conversely, suppose that a surface-link $F$ has a motion picture $F_{[t]}$ ($t \in \R$) satisfying (R1) -- (R3).
Then, $F$ is ribbon (\cite{Kawauchi-Shibuya-Suzuki1982,Kawauchi-Shibuya-Suzuki1983}).
This can be proved as follows:
Let $\mathcal{D}_- = \mathrm{cl}(F \cap (\R^3 \times (-\infty, -1)))$, $\mathcal{A} = \mathrm{cl}(F \cap (\R^3 \times (-1,1)))$, and $\mathcal{D}_+ = \mathrm{cl}(F \cap (\R^3 \times (1,\infty)))$, where $\mathrm{cl}(X)$ is the closure of $X$.
$\mathcal{D}_\pm$ are unions of disjoint 2-disks, $\partial \mathcal{D}_\pm$ are trivial 1-links in $\R^3 \times \{\pm 1\}$, and $\mathcal{A}$ is a union of disjoint annuli.

Let $L = p_1(\partial \mathcal{D}_+)$, $B = F_{[1]} \setminus \mathrm{int}(L)$, and $L_B = p_1(\partial \mathcal{A})$, where $p_1: \R^4 = \R^3 \times \R^1 \to \R^3$ is the projection onto the first factor $\R^3$.
We have $L_B = L \cup \partial B \setminus \mathrm{int}(L \cap B)$.
Take a union $D$ of disjoint 2-disks in $\R^3$ bounded by $L$.
Since $F$ satisfies (R3), $\mathcal{A}$ is isotopic to $L_B \times [-1,1]$ in $\R^3 \times [-1,1]$ relative to the boundary.
Furthermore, $\mathcal{D}_\pm$ are \textit{trivial disk systems} (\cite{Kamada2002_book,Kamada2017_book}); hence $\mathcal{D}_+$ is isotopic to $(L \times [1, 2]) \cup (D \times \{2\})$ in $\R^3 \times [1,\infty]$ relative to the boundary.
Similarly, $\mathcal{D}_-$ is isotopic to $(L \times [-2,-1]) \cup (D \times \{-2\})$ in $\R^3 \times [-\infty, -1]$ relative to the boundary.
As a result, $F$ is isotopic to $F(L,B)$ in $\R^4$.
Therefore, $F$ is ribbon, and $(L, B)$ is a ribbon presentation of $F$.

The reflection of $\R^4$ about $\R^3 \times \{0\}$ preserves $F(L, B)$.
Hence, every ribbon surface-link is negatively amphicheiral; that is, the mirror image $F!$ is equivalent to $-F$.
See \cite{Marumoto1992,Yajima1964,Yanagawa1969} for details on ribbon surface-links.

\begin{definition}[\cite{Kamada1998,Kamada2002_book}]
    A braid system $b = (b_1, \dots, b_r)$ is called \textit{ribbon} if $r$ is even and $b_{2i-1} = b_{2i}^{-1}$ holds for each $i = 1, 2, \dots, r/2$.
    A 2-dimensional braid is called \textit{ribbon} if it has a ribbon braid system.
\end{definition}

\begin{proposition}\label{Ribbon 2-dim braid gives ribbon surface-link}
    Let $S$ be a ribbon 2-dimensional $2n$-braid.
    Then, the plat closure $\tilde{S}$ is a ribbon surface-link.
\end{proposition}

\begin{proof}
    Let $S$ be a ribbon 2-dimensional $2n$-braid and let $b_S =  (b_1, \dots, b_{2r})$ be a ribbon braid system of $S$; $b_{2i-1} = b_{2i}^{-1}$ ($i = 1, \dots, r$).
    For each $i$, take a geometric $2n$-braid $\beta_i$ satisfying $b_{2i-1} = \beta_i \sigma_1 \beta_i^{\, -1}$.
    Then, $S$ has a motion picture as shown in \Cref{fig:MotPic-ribbonBraid}, which is symmetric with respect to $\R^3\times \{0\}$ (\cite[Section~29]{Kamada2002_book}).
    Thus, $\tilde{S}$ also has a symmetric motion picture with respect to $\R^3\times \{0\}$ that satisfies (R1) -- (R3).
    Hence, $\tilde{S}$ is a ribbon surface-link.
\end{proof}

\begin{figure}[h]
    \centering
    \includegraphics[width = \hsize]{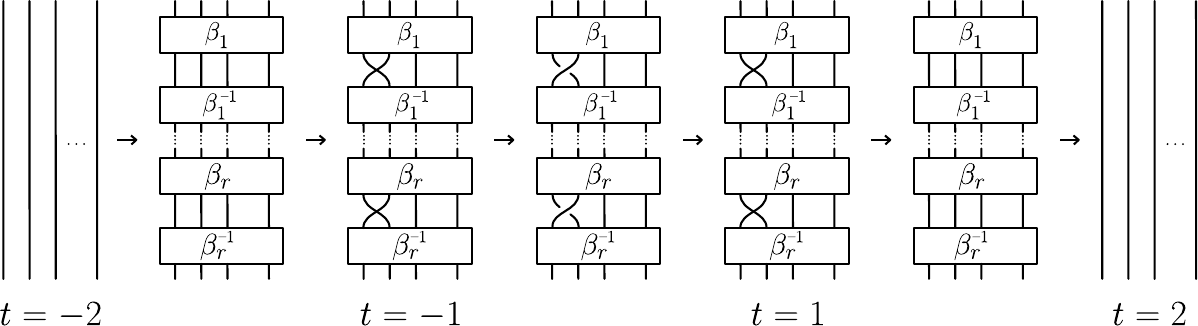}
    \caption{A motion picture $\{S_{[t]}\}$ of a ribbon 2-dimensional braid $S$.}
    \label{fig:MotPic-ribbonBraid}
\end{figure}

\begin{corollary}\label{Ribbon presentation of plat closure}
    Let $S$ be a ribbon $2$-dimensional $2n$-braid with a motion picture $\{S_{[t]}\}_{t\in[-2,2]}$, as illustrated in \Cref{fig:MotPic-ribbonBraid}.
    Then $\tilde S$ has a ribbon presentation obtained from $\tilde S_{[1]}$ by replacing each branch point with a band, as in \Cref{fig:BranchPointToBand}.
\end{corollary}

\begin{figure}[h]
    \centering
    \includegraphics[height = 35mm]{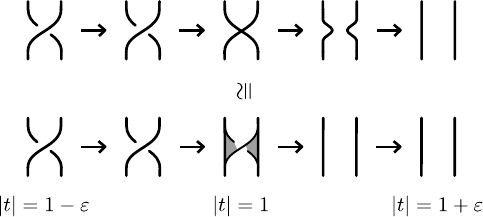}
    \caption{An isotopy deforming a branch point into a band.}
    \label{fig:BranchPointToBand}
\end{figure}

\begin{corollary}[cf. \cite{Yasuda21}]\label{2-plat 2-knot is ribbon}
    For $n\geq 2$, an $n$-plat surface-link $F$ with $\chi(F) = 2n-2$ is ribbon.
    In particular, a 2-plat 2-knot is a ribbon 2-knot of 1-fusion.
\end{corollary}

\begin{proof}
    Let $F$ be an $n$-plat surface-link with $\chi(F) = 2n-2$ and let $S$ be a 2-dimensional $2n$-braid such that $\tilde{S}$ is equivalent to $F$.
    Since $\chi(F) = 2n-2$, the projection $\pi_S$ has exactly two branch points.

    Let $b_S = (b_1, b_2)$ be a braid system of $S$.
    Since $b_S$ satisfies $b_1 b_2 = 1$, $b_2$ is the inverse of $b_1$.
    Therefore, $b_S$ is slide equivalent to a ribbon braid system.
    Thus, $S$ is also a ribbon 2-dimensional braid, and $F$ is a ribbon surface-link by \Cref{Ribbon 2-dim braid gives ribbon surface-link}.
    In particular, the pair of branch points corresponds to a 1-handle; hence $F$ is of 1-fusion.
\end{proof}

\section{Normal forms of 2-plat 2-knots}\label{Section: Normal forms of 2-plat 2-knots}

In this section, we introduce normal forms of oriented 2-plat 2-knots.
Since every 2-bridge 1-knot is invertible, the choice of orientations is not important for their classification.
However, as mentioned in \Cref{Main:2-plat 2-knot is non-invertible}, non-trivial 2-plat 2-knots are non-invertible.
Thus, we should be careful about the choice of orientations.

We first consider the choice of orientation of a 2-bridge 1-knot via Conway's normal forms $K_C(c_1, \dots, c_m)$ and Schubert's normal forms $K_S(p,q)$.
Later, we generalize these normal forms for oriented 2-plat 2-knots.

% These normal forms may be regarded as natural generalizations of the corresponding normal forms of oriented 2-bridge 1-knots.
% We therefore begin by recalling Schubert's normal forms $K(p,q)$ and Conway's normal forms $K(p,q)$ for oriented 2-bridge 1-knots.

\subsection{Conway's normal forms of unoriented 2-bridge 1-knots}

Let $[c_1, \dots, c_m]$ denote a continued fraction:
\begin{align*}
    [c_1, c_2, \dots, c_m] ~=~
    \begin{cases}
        1/\left(c_1 + [c_2, c_3, \dots, c_m]\right) & (m \geq 2)\\
        1/c_1  & (m = 1),
    \end{cases}
\end{align*}
where $m > 0$ and $c_i \in \Z$ ($i = 1, 2, \dots, m$).
Here, we use $\infty = 1/0$ as a formal expression.
It is straightforward that $[0, c_2, \dots, c_m] = 1/[c_2, \dots, c_m]$.

A \textit{tangle} is a union of disjoint curves properly embedded in $\R^2 \times (-\infty, 1]$, and two tangles are said to be \textit{equivalent} if they are related by an isotopy of $\R^2\times (-\infty, 1]$ fixing $\R^2\times \{1\}$ pointwise.
A tangle is called \textit{trivial} if it is equivalent to a tangle $T$ such that the restriction $p_2|_T$ has no maximal points, where $p_2: \R^3 = \R^2 \times \R^1 \to \R^1$ is the projection onto the second factor.

Suppose that $m$ is odd.
Let $\beta_c$ be a geometric 4-braid representing $\sigma_2^{c_1} \sigma_1^{-c_2} \sigma_2^{c_3} \cdots \sigma_1^{-c_{m-1}} \sigma_2^{c_m} \in B_4$.
A \textit{rational tangle} $T(c_1, \dots, c_m)$ is a 2-strand tangle defined as a union of a geometric 4-braid $\beta_c$ and $w_2 \times \{0\}$.
\Cref{fig:Dig-RationalTangle} shows a diagram of $T(c_1, \dots, c_m)$.
It was shown in \cite{Conway1970,Kauffman-Lambropoulou2004} that
\begin{itemize}
    \item a 2-strand tangle is trivial if and only if it is equivalent to a rational tangle, and
    \item $T(c_1, \dots, c_m)$ and $T(d_1, \dots, d_{m'})$ are equivalent if and only if $[c_1, \dots, c_m] = [d_1, \dots, d_{m'}]$.
\end{itemize}
Thus, for coprime integers $p, q$ with $p \geq 0$, we define $T(p,q) = T(c_1, \dots, c_m)$, where $q/p = [c_1, \dots, c_m]$.
% We call a rational tangle in the form $T(c_1, \dots, c_m)$ \textit{Conway's normal form}.

We define the unoriented 1-knot or 1-link $K_C(c_1, \dots, c_m)$ as the plat closure of $\beta_c$.
We call $K_C(c_1, \dots, c_m)$ \textit{Conway's normal form} for unoriented 2-bridge 1-knots.
%  (see \Cref{fig:Dig-RationalTangle}).
Using the above facts on rational tangles, we see that
\begin{itemize}
    \item every unoriented 2-bridge 1-knot is equivalent to $K_C(c_1, \dots, c_m)$ for some $c_i \in \Z$, and
    \item $K_C(c_1, \dots, c_m)$ and $K_C(d_1, \dots, d_n)$ are equivalent if $[c_1, \dots, c_m] = [d_1, \dots, d_n]$.
\end{itemize}

\begin{figure}[h]
    \centering
    \begin{minipage}{0.50\hsize}
        \centering
        \includegraphics[height = 35mm]{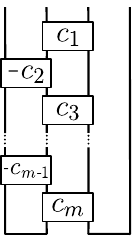}
        % \caption{An (unoriented) rational tangle $T(c_1, \dots, c_m)$.}
    \end{minipage}
    \begin{minipage}{0.49\hsize}
        \centering
        \includegraphics[height = 35mm]{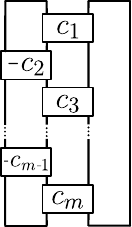}
        % \caption{An unoriented 2-bridge 1-knot $K(c_1, \dots, c_m)$.}
    \end{minipage}
    \caption{A rational tangle $T(c_1, \dots, c_m)$ and Conway's normal form $K_C(c_1, \dots, c_m)$.}
    \label{fig:Dig-RationalTangle}
\end{figure}

\subsection{Schubert's normal forms of oriented 2-bridge 1-knots}

Let $p$ and $q$ be coprime integers with $0 < |q| < p$, where $p > 0$ is odd.
\textit{Schubert's normal form} $K(p,q)$ of an oriented 2-bridge 1-knot is defined by the following algorithm.

Set $X_4=\{A_1,A_2,A_3,A_4\}$, where each point $A_i=(x_i,0) \in \R^2$ lies on the $x$-axis and $x_i<x_{i+1}$.
Draw the line segments $A_1A_2$ and $A_3A_4$.
We take $p-1$ points on $A_1A_2$ (or $A_3A_4$) at equal intervals, and label the points from $A_1$ (or $A_4$) by $1,2,\dots,p$, respectively.
Let $n_i = \lfloor iq/p \rfloor$ be the integer part of $iq/p$ for $i = 1, \dots, p-1$.
Note that $n_1 = 0$ if $q > 0$, whereas $n_1 = -1$ if $q < 0$.

We draw an under-path $\ell$ of $K(p,q)$ as follows:
$\ell$ starts at $A_1$.
If $n_1$ is even, $\ell$ first passes the point labeled $q$ on $A_3A_4$ by a counter-clockwise $\pi$-rotation.
Otherwise, if $n_1$ is odd, $\ell$ first passes the point labeled $-q$ on $A_3A_4$ by a clockwise $\pi$-rotation.
% in the counter-clockwise direction if $n_1$ is even, otherwise in the clockwise direction if $n_1$ is odd.
Next, if $n_2$ is even, $\ell$ passes the point labeled $2q \pmod{p}$ on $A_1A_2$ in the counter-clockwise direction.
Otherwise, if $n_2$ is odd, it passes the point labeled $-2q \pmod{p}$ on $A_1A_2$ in the clockwise direction.
Repeating this procedure, $\ell$ alternately passes points on $A_1A_2$ and $A_3A_4$ labeled $iq \pmod p$ or $-iq \pmod p$ in the counter-clockwise or clockwise direction according to the parity of $n_i$.
After passing $p-1$ points, $\ell$ arrives at $A_3$ if $n_p = q$ is odd, otherwise arrives at $A_4$ if $n_p = q$ is even.
Drawing the symmetric under-path, we obtain another under-path $\ell'$.
We define the unoriented 1-knot $K(p,q) = w_2 \times \{1\} \cup (\ell \cup \ell') \times \{0\}$, where $w_2 = A_1A_2 \cup A_3A_4$.
We call $w_2 \times \{1\}$ the \textit{bridges} of $K(p,q)$.

Finally, we choose an orientation on the line segment $A_3A_4 \times \{1\}$ pointing toward $A_3$.
This induces an orientation of $K(p,q)$, thereby making it an oriented 1-knot $K(p,q)$.
We call it the \textit{canonical orientation} of $K(p,q)$.
See \Cref{fig:NF-2bridgeKnot}.
By definition, the orientation on $A_1A_2 \times \{1\}$ points toward $A_2$ if $q$ is odd, otherwise points toward $A_1$ if $q$ is even.

% We call $K(p,q)$ \textit{Schubert's normal form} of an oriented 2-bridge 1-knot.
% By the orthogonal projection $\R^3 = \R^2 \times \R^1 \to \R^2\times \{0\}$, we have a diagram $D(p,q)$ of $K(p,q)$.
% See \Cref{fig:NF-2bridgeKnot}.

\begin{figure}[h]
    \centering
    \begin{minipage}{0.47\hsize}
        \centering
        \includegraphics[width = 0.6\hsize]{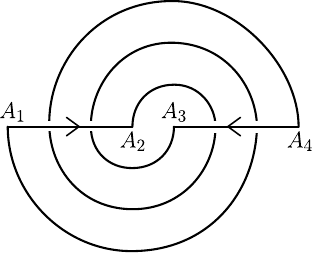}
        % \caption{diagram $D(3,1)$.}
        % \label{fig:NF-2bridgeKnot1}
    \end{minipage}
    \begin{minipage}{0.47\hsize}
        \centering
        \includegraphics[width = 0.6\hsize]{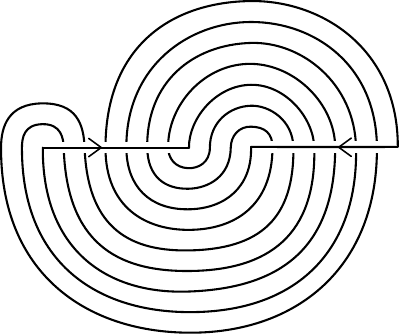}
        % \caption{The diagram $D(7,3)$.}
    \end{minipage}
    \caption{Schubert's normal forms of $K(3,1)$ and $K(7,3)$.}
    \label{fig:NF-2bridgeKnot}
\end{figure}
% (\cite{Hartley1979,Minkus1982,Schubert1954}).

\begin{remark}
    In this paper, we use the symbol $K_C(c_1, \dots, c_m)$ for unoriented 2-bridge 1-knots and $K(p,q)$ for oriented ones.
\end{remark}

The following two propositions are obtained by reading the signs of crossings along an under-path of $K(p,q)$:

\begin{proposition}[\cite{Schubert1954}]\label{Presentation of knot group of 2-bridge knots}
    Suppose that both $p$ and $q$ are odd, and put $\varepsilon_i = (-1)^{\lfloor iq/p \rfloor}$.
    % Let $p$ and $q$ be coprime odd integers with $0 < |q| < p$, and $\varepsilon_i = (-1)^{\lfloor iq/p \rfloor}$ for $i = 1, 2, \dots, p-1$.
    Then, the fundamental group $\pi_1(\R^3 \setminus K(p,q))$ has a presentation
    \[
        \langle x, y \mid y = w x w^{-1} \rangle, \quad
        w = x^{\varepsilon_1}y^{\varepsilon_2}x^{\varepsilon_3}y^{\varepsilon_4}\cdots x^{\varepsilon_{p-2}}y^{\varepsilon_{p-1}}.
    \]
    Here, $x$ and $y$ represent meridional loops of the two bridges.
\end{proposition}

\begin{proposition}[\cite{Hartley1979,Minkus1982}]\label{Alexander polynomial of 2-bridge knots}
    % Let $p$ and $q$ be coprime odd integers with $0 < |q| < p$.
    Suppose that both $p$ and $q$ are odd, and put $\varepsilon_i = (-1)^{\lfloor iq/p \rfloor}$ for $i = 1, 2, \dots, p-1$.
    Then, we have
    \begin{align*}
        \Delta_{K(p,q)}(t)  ~\doteq~ 1 + \sum_{k=1}^{p-1} (-1)^k t^{\,d(k)} ~=~ 1 - t^{\,\varepsilon_1} - t^{\,\varepsilon_1 + \varepsilon_2} + \cdots + t^{\,\varepsilon_1 + \cdots + \varepsilon_{p-1}}, \quad d(k) = \sum_{i=1}^{k} \varepsilon_i,
    \end{align*}
\end{proposition}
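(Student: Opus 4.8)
My plan is to compute $\Delta_{K(a/p)}(t)$ from Schubert's group presentation (\Cref{Presetation of knot group of 2-bridge knots}) by the free differential calculus. Write the knot group as $G=\langle x,y\mid r\rangle$ with $r=w^{-1}ywx^{-1}$ and $w=x^{\varepsilon_1}y^{\varepsilon_2}\cdots x^{\varepsilon_{p-2}}y^{\varepsilon_{p-1}}$, let $F=F(x,y)$ be the free group on $x,y$, and let $\phi\colon\Z[F]\to\Z[t^{\pm1}]$ be the ring homomorphism induced by $x,y\mapsto t$; note $\phi$ factors through $\Z[G]$ because both generators are meridians and hence have the same image in $H_1(G)\cong\Z$. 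Since the presentation has deficiency one, $\Delta_{K(a/p)}(t)$ equals, up to a unit $\pm t^{k}$, the Fox derivative $\phi(\partial r/\partial x)$ or equivalently $\phi(\partial r/\partial y)$ — these differ only in sign, as one sees by applying $\phi$ to the fundamental identity $r-1=(\partial r/\partial x)(x-1)+(\partial r/\partial y)(y-1)$ and using $\phi(r)=1$.

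Next I would compute $\partial r/\partial y$ with the product and inverse rules. Using $\partial x^{-1}/\partial y=0$ and $\partial w^{-1}/\partial y=-w^{-1}(\partial w/\partial y)$, a short calculation gives
\[
\frac{\partial r}{\partial y}\;=\;w^{-1}+w^{-1}(y-1)\,\frac{\partial w}{\partial y}.
\]
Applying $\phi$ and discarding the unit $\phi(w^{-1})$ (a power of $t$) yields $\Delta_{K(a/p)}(t)\doteq 1+(t-1)\,\phi(\partial w/\partial y)$.

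The remaining step is to evaluate $\phi(\partial w/\partial y)$ syllable by syllable. Writing $w=u_1\cdots u_{p-1}$ with $u_i=x^{\varepsilon_i}$ for $i$ odd and $u_i=y^{\varepsilon_i}$ for $i$ even, the product rule gives $\partial w/\partial y=\sum_i(u_1\cdots u_{i-1})(\partial u_i/\partial y)$, and only even $i$ contribute. For even $i$, $\phi(\partial y^{\varepsilon_i}/\partial y)=(t^{\varepsilon_i}-1)/(t-1)$ and $\phi(u_1\cdots u_{i-1})=t^{d(i-1)}$ with $d(k)=\varepsilon_1+\cdots+\varepsilon_k$, so
\[
\phi\!\left(\frac{\partial w}{\partial y}\right)\;=\;\sum_{\substack{2\le i\le p-1\\ i\text{ even}}}t^{d(i-1)}\,\frac{t^{\varepsilon_i}-1}{t-1}.
\]
Substituting this back, the factor $t-1$ cancels and $d(i-1)+\varepsilon_i=d(i)$, so $\Delta_{K(a/p)}(t)\doteq 1+\sum_{i\text{ even}}\bigl(t^{d(i)}-t^{d(i-1)}\bigr)$. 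Since $p$ is odd, the even indices are exactly $2,4,\dots,p-1$, and grouping the telescoped sum term by term — each $t^{d(k)}$ occurring once, with sign $(-1)^k$ — gives $1+\sum_{k=1}^{p-1}(-1)^k t^{d(k)}$, which is the asserted formula after rewriting as an alternating sum.

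I expect the only delicate points to be the reduction to a single Fox derivative, which rests on both generators being meridians, and keeping careful track that all equalities hold only up to a unit $\pm t^{k}$; the rest is a routine free-differential computation (and one may, of course, also appeal directly to \cite{Hartley1979,Minkus1982}). I remark that the identical argument applied to the presentation of \Cref{Presetation of knot groups of 2-plat 2-knots} likewise proves the formula of \Cref{Formula of Alexander polynomials for 2-plat 2-knots}.
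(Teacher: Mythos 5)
Your computation is correct, and I verified the details: the Fox derivative of $r=w^{-1}ywx^{-1}$ is indeed $w^{-1}+w^{-1}(y-1)\,\partial w/\partial y$, the unit $\phi(w^{-1})=t^{-d(p-1)}$ can be discarded, and because $p$ is odd the even syllables run over $i=2,4,\dots,p-1$, so the telescoping produces exactly $1+\sum_{k=1}^{p-1}(-1)^k t^{d(k)}$. The route, however, differs from the paper's: the paper does not prove this proposition at all, but quotes it from \cite{Hartley1979,Minkus1982}, having obtained the over presentation of \Cref{Presetation of knot group of 2-bridge knots} by reading the crossing signs $\varepsilon_i=(-1)^{\lfloor ia/p\rfloor}$ along an under path of Schubert's normal form; your argument instead derives the polynomial formula from that presentation by a self-contained free-differential computation, which makes the logical dependence explicit (presentation $\Rightarrow$ polynomial) and mirrors how the paper itself handles the 2-plat case, where \Cref{Formula of Alexander polynomials for 2-plat 2-knots} is deduced from \Cref{Ribbon types of 2-plat 2-knots} together with the Kinoshita--Marumoto formula (\Cref{Formula of Alexander polynomial for ribbon 2-knots}), whose proof is essentially the same Fox calculus you carried out. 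Two small points of care: the reduction to a single Fox derivative uses not just deficiency one but the fact that \emph{both} generators are meridians, hence map to $t$ under abelianization --- you state this, and it is exactly what makes $E_1$ principal via the fundamental identity; and your closing remark that the identical computation reproves the 2-plat formula is fine, but there it replaces (rather than bypasses) the cited Kinoshita--Marumoto input, since one must still know that the deleted Fox Jacobian computes the order of $H_1$ of the infinite cyclic cover for a 2-knot group presentation --- which holds because that module is $G'/G''$ and so depends only on the group. (Note also that your result matches the summation form of the statement; the written-out expansion in the proposition has a sign slip in its second term, as comparison with Theorem \ref{Formula of Alexander polynomials for 2-plat 2-knots} shows.)
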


We note that Hoste \cite{Hoste2020} gave a similar formula for 2-variable Alexander polynomials of 2-bridge 1-links.

\begin{theorem}[\cite{Schubert1954}]
    $K(p,q)$ and $K(p',q')$ are equivalent if and only if the following hold: (1) $p = p'$ and (2) $q' \equiv q^{\pm 1} \pmod{p}$.
\end{theorem}

% To choose canonical orientations of 2-bridge 1-knots, we introduce the canonical continued fraction expansion of $p/q$.

Next, we consider oriented 2-bridge 1-knots in Conway's normal forms.

% To consider canonical orientations of 2-bridge 1-knots in Conway's normal forms, we introduce the canonical continued fraction:

\begin{definition}[\cite{Kauffman-Lambropoulou2004}]
    % Let $p$ and $q$ be coprime non-zero integers with $p > 0$, and let $[c_1, \dots, c_m]$ be a continued fraction expansion of $q/p$ with $m$ odd.
    A continued fraction $q/p = [c_1, \dots, c_m]$ is called \textit{canonical} if $m$ is odd and it satisfies one of the following conditions:
    % A \textit{canonical}  $q/p = [c_1, \dots, c_m]$ is one satisfying one of the following:
    \begin{enumerate}
        \item $[c_1, \dots, c_m] = [0, 1, -1]$. (In this case, $q/p = 0/1$.)
        \item If $c_1 \neq 0$, all $c_i$ ($i = 1, \dots, m$) are non-zero and have the same sign. (In this case, $|q| < p$ holds.)
        \item If $c_1 = 0$, all $c_i$ ($i = 2, \dots, m$)  are non-zero and have the same sign. (In this case, $p < |q|$ holds.)
    \end{enumerate}
\end{definition}

For any coprime non-zero integers $p, q$, the fraction $q/p$ has a canonical continued fraction expansion by the Euclidean algorithm.
Furthermore, such an expansion is unique for each $q/p$ (\cite[Proposition~3]{Kauffman-Lambropoulou2004}).
The canonical continued fraction expansions of $0/1$ and $1/0$ are $[0,1,-1]$ and $[0]$, respectively.

Let $f_i:\mathbb{R}^2\to\mathbb{R}^2$ be a half-twist around the line segment $A_iA_{i+1}$ counter-clockwise. See \Cref{fig:HalfTwist2}.
Let $[c_1, \dots, c_m]$ be the canonical continued fraction expansion of $q/p$.
%  and let $\beta_0 = X_4 \times [0,1]$ be the trivial 4-braid.
We define the homeomorphism $f_c: \R^2 \to \R^2$ by $f_c = f_2^{\, c_1} \circ f_1^{\, -c_2} \circ f_2^{\, c_3} \circ \cdots \circ f_1^{\, -c_{m-1}} \circ f_2^{\, c_m}$.
Then, $f_c(w_2)$ is a union of under-paths of $K(p,q)$.

\begin{figure}[h]
    \centering
    \includegraphics[height = 30mm]{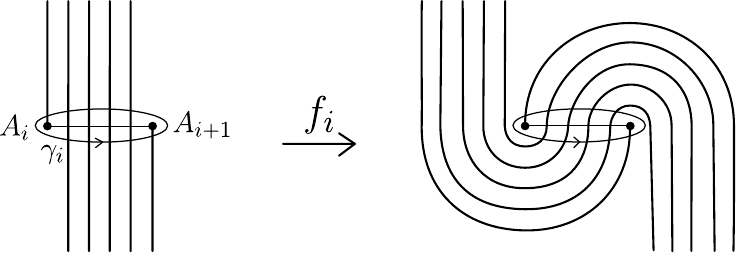}
    \caption{The positive half-twist $f_i$ around $A_iA_{i+1}$.}
    \label{fig:HalfTwist2}
\end{figure}

Let $\{F_{i,t}\}_{t \in I}$ be an isotopy of $\R^2$ such that $F_{i,0} = \id_{\R^2}$ and $F_{i,1} = f_i$.
We define two isotopies $\{F_t: \R^2 \to \R^2\}_{t \in I}$ and $\{\varPhi_t: \R^3 \to \R^3\}_{t \in I}$ by
\begin{align*}
    F_t &= F_{2,t}^{\, c_1} \circ F_{1,t}^{\, -c_2} \circ F_{2,t}^{\, c_3} \circ \cdots \circ F_{1,t}^{\, -c_{m-1}} \circ F_{2,t}^{\, c_m} \quad (t \in I),\\
    \varPhi_t(x,s) ~&=~
    \begin{cases}
        (F_{(t - t|s|)}(x), s) & |s| \leq 1,\\
        (x,s) & |s| \geq 1,
    \end{cases} \quad \left( (x,s) \in \R^2 \times \R^1 = \R^3, t \in I \right).
\end{align*}
By definition, we have $\varPhi_0 = \id_{\R^3}$, and $\{F_t\}$ is a planar isotopy carrying $w_2$ to $f_c(w_2)$.
Moreover, the geometric 4-braid $\beta_c$ defined by
\begin{align*}
    \beta_c ~=~ \bigcup_{t \in I} \left( F_{1-t}(X_4) \times \{t\} \right) ~\subset~ \R^2 \times [0,1]
\end{align*}
represents $\sigma_2^{c_1} \sigma_1^{-c_2} \sigma_2^{c_3} \cdots \sigma_1^{-c_{m-1}} \sigma_2^{c_m} \in B_4$.
% Thus, the plat closure of $\beta_c$ gives
Therefore, we have the following proposition:

\begin{proposition}\label{Relationship between Schubert and Conway normal forms}
    Let $[c_1, \dots, c_m]$ be the canonical continued fraction expansion of $q/p$, and let $T$ be the 2-strand tangle obtained by removing the bridges from $K(p,q)$.
    Then, $T$ is equivalent to a rational tangle $T(c_1, \dots, c_m)$.
\end{proposition}

\begin{proof}
    Let $f_c$, $\beta_c$, $\{\varPhi_t\}_{t \in I}$ be as above.
    Since $\beta_c$ represents $\sigma_2^{c_1} \sigma_1^{-c_2} \sigma_2^{c_3} \cdots \sigma_1^{-c_{m-1}} \sigma_2^{c_m} \in B_4$, $T(c_1, \dots, c_m) = \beta_c \cup w_2 \times \{0\}$.
    Moreover, $f_c(w_2)$ is a union of under-paths of $K(p,q)$ so that $T = (\partial w_2 \times [0,1]) \cup (f_c(w_2) \times \{0\})$.
    Therefore, $T$ is equivalent to $T(c_1, \dots, c_m)$ via the isotopy $\{\varPhi_t\}_{t \in I}$.
\end{proof}

The isotopy $\{\varPhi_t\}_{t \in I}$ of $\R^3$ in the proof above fixes the bridges of $K(p,q)$ pointwise.
Hence, the orientations of 2-bridge 1-knots shown in \Cref{fig:Dig-2bridgeKnot_Conway} are well-defined.

\begin{figure}[h]
    \begin{minipage}{0.45\hsize}
        \centering
        \includegraphics[height = 40mm]{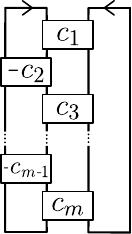}
        % \caption{An oriented 2-bridge 1-knot $K(p,q)$ with $q$ odd.}
        % \label{fig:A2-bridge1-knotK(pq)withqodd}
    \end{minipage}
    \quad
    \begin{minipage}{0.45\hsize}
        \centering
        \includegraphics[height = 40mm]{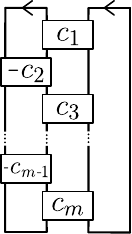}
        % \caption{An oriented 2-bridge 1-knot $K(p,q)$ with $q$ even.}
        % \label{fig:A2-bridge1-knotK(pq)withqeven}
    \end{minipage}
    \caption{An oriented 2-bridge 1-knot $K(p,q)$ with $q$ odd (left) and with $q$ even (right).}
    \label{fig:Dig-2bridgeKnot_Conway}
\end{figure}

\subsection{Normal forms of 2-plat 2-knots}\label{Subsection: normal forms of 2-plat 2-knots}

First, we consider unoriented 2-plat 2-knots.
Let $m \geq 1$ be an odd number, and let $c_1, \dots, c_m$ be integers.
Let $S(c_1, \dots, c_m)$ be a ribbon 2-dimensional 4-braid with a ribbon braid system $(\beta_c \sigma_1 \beta_c^{-1}, \beta_c \sigma_1^{\,-1} \beta_c^{-1})$, where $\beta_c$ is a geometric 4-braid representing $\sigma_2^{c_1} \sigma_1^{-c_2} \sigma_2^{c_3} \dots \sigma_1^{-c_{m-1}} \sigma_2^{c_m}\in B_4$.

By \Cref{Ribbon 2-dim braid gives ribbon surface-link}, the plat closure $F = \tilde{S}(c_1, \dots, c_m)$ is an unoriented ribbon surface-link.
By \Cref{Ribbon presentation of plat closure}, it has a ribbon presentation $(L, B)$ as shown in \Cref{fig:Intro-1}, where $L$ is a 2-component 1-link and $B$ is a single band.
Put $q/p = [c_1, \dots, c_m]$.
By comparison with the case of 2-bridge 1-knots, the band $B$ is attached to different components of $L$ if and only if $p$ is odd.
Since the Euler characteristic $\chi(F) = 2$, it follows that $F$ is a 2-knot if $p$ is odd; otherwise, $F$ consists of a 2-sphere and a Klein bottle.

\begin{lemma}\label{easy result}
    For an integer $k$, $\tilde S(c_1, \dots, c_m)$ and $\tilde S(0, k, c_1, \dots, c_m)$ are equivalent.
    Similarly, $\tilde S(c_1, \dots, c_m)$ and $\tilde S(c_1, \dots, c_m, k, 0)$ are equivalent.
\end{lemma}

\begin{proof}
    This follows immediately from ribbon presentations as shown in \Cref{fig:Intro-1}.
    See also \Cref{fig:MainProof2}.
\end{proof}

\begin{lemma}\label{Well-definedness of normal form for 2-plat 2-knots}
    Let $m,n >0$ be odd integers and let $c_i, d_j \in \Z$ ($i = 1,\dots, m, j = 1, \dots, n $).
    Then, $\tilde S(c_1, \dots, c_m)$ and $\tilde S(d_1, \dots, d_n)$ are equivalent if $[c_1, \dots, c_m] = [d_1, \dots, d_n]$.
\end{lemma}

\begin{proof}
    Suppose that $[c_1, \dots, c_m] = [d_1, \dots, d_{n}] = q/p$.
    Put $b/a = [c_m, \dots, c_1]$ and $b'/a' = [d_n, \dots, d_1]$.
    By elementary computations of continued fractions, we have $a = a' = p$ and $b \equiv b' \pmod{a}$.
    Hence, we have $b = b' + ka$ for some $k \in \Z$, which implies that $b/a = [0, k, d_n, \dots, d_1]$.
    By \Cref{easy result}, $\tilde S(d_1, \dots, d_n)$ is equivalent to $\tilde S(d_1, \dots, d_n, k, 0)$.
    Thus, it suffices to show that $\tilde S(c_1, \dots, c_m)$ and $\tilde S(d_1, \dots, d_n, k, 0)$ are equivalent.

    Let $(L, B)$ and $(L', B')$ be ribbon presentations of $\tilde S(c_1, \dots, c_m)$ and $\tilde S(d_1, \dots, d_n, k, 0)$ as shown in \Cref{fig:Intro-1,fig:MainProof2}, respectively.
    % be a ribbon presentation of $\tilde{S}(c_1, \dots, c_m)$ as shown in \Cref{fig:Intro-1} and $(L', B')$ one of $\tilde{S}(d_1, d_2, \dots, d_n, k, 0)$ as shown in \Cref{fig:MainProof2}.
    % Two $k$ twists in $L'$ adjacent to the band $B'$ can be removed from $L'$ so that $\tilde{S}(d_1, d_2, \dots, d_n, k, 0)$ is equivalent to $\tilde{S}(d_1, d_2, \dots, d_n)$.
    % Thus, $(L', B')$ is a ribbon presentation of $\tilde{S}(d_1, d_2, \dots, d_n)$.
    % Hence, it suffices to show that $(L, B)$ and $(L', B')$ are isotopic.
    We divide $(L', B')$ into three parts as shown on the right-hand side of \Cref{fig:MainProof2}: a rational tangle $T' = T(0, k, d_n, \dots, d_1)$, four parallel strands with a single band, and the mirror image of $T'$.
    Similarly, we divide $(L, B)$ into three parts: a rational tangle $T = T(c_m, \dots, c_1)$, four parallel strands with a single band, and the mirror image of $T$.
    Since $[c_m, \dots, c_1] = [0, k, d_n, \dots, d_1] = b/a$, $T$ and $T'$ are equivalent.
    Hence, $(L, B)$ is isotopic to $(L', B')$.
    Therefore, two surface-links $\tilde{S}(c_1, \dots, c_m)$ and $\tilde{S}(d_1, \dots, d_n)$ are equivalent.
    \begin{figure}[h]
        \centering
        \includegraphics[width = 0.8\hsize]{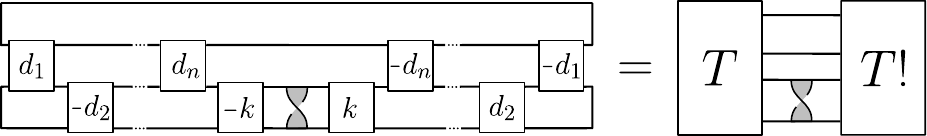}
        \caption{A ribbon presentation $(L', B')$ of $\tilde S(d_1, \dots, d_n, k, 0)$ and its decomposition.}
        \label{fig:MainProof2}
    \end{figure}
    % \begin{figure}[h]
    %     \centering
    %     \includegraphics[height = 17mm]{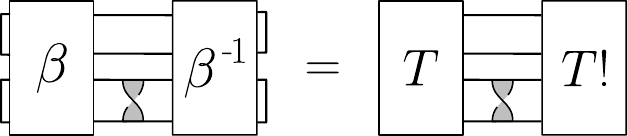}
    %     % \vspace{17mm}
    %     \caption{A ribbon presentation and a rational tangle $T$.}
    %     \label{fig:MainProof1}
    % \end{figure}
\end{proof}

Next, we consider oriented $2$-plat $2$-knots.
% Let $p$ and $q$ be  coprime integers with $p$ odd and $0 < |q| < p$.
To choose a canonical orientation of the plat closure $\tilde{S}(c_1,\dots,c_m)$, we use the canonical continued fraction expansion of $q/p$.

\begin{definition}\label{def: oriented 2-plat 2-knot}
    Let $p$ and $q$ be coprime integers with $p > 0$.
    Let $[c_1,\dots,c_m]$ be the canonical continued fraction expansion of $q/p$.
    We define $F(p,q)$ to be $\tilde{S}(c_1,\dots,c_m)$ equipped with the orientation indicated in \Cref{fig:normal form of 2-plat 2-knot with a odd} if $q$ is odd, and in \Cref{fig:normal form of 2-plat 2-knot with a even} if $q$ is even.
    % We call this orientation the \textit{canonical orientation} of $F(p,q)$.
\end{definition}

Since the band has a half-twist, we see that the orientation of one component of $L$ is reversed compared with the orientation of $K(p,q)$.

\begin{figure}[h]
    \begin{minipage}{0.45\hsize}
        \centering
        \includegraphics[width = \hsize]{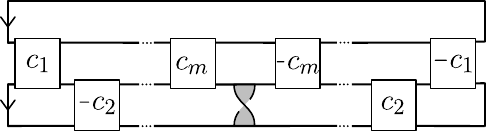}
        \caption{A normal form of $F(p,q)$ with $q$ odd.}
        \label{fig:normal form of 2-plat 2-knot with a odd}
    \end{minipage}
    \quad
    \begin{minipage}{0.45\hsize}
        \centering
        \includegraphics[width = \hsize]{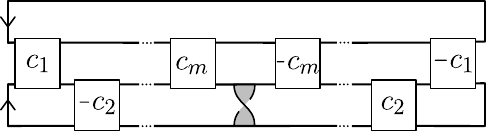}
        \caption{A normal form of $F(p,q)$ with $q$ even.}
        \label{fig:normal form of 2-plat 2-knot with a even}
    \end{minipage}
\end{figure}

\begin{remark}
    In this paper, we use the symbol $\tilde{S}(c_1, \dots, c_m)$ for unoriented 2-plat 2-knots and $F(p,q)$ for oriented ones.
\end{remark}

% \begin{figure}[h]
%     \centering
%     \includegraphics[height = 17mm]{fig/NF-2plat2KnotOdd.pdf}
%     \caption{A normal form of the oriented $2$-plat $2$-knot $F(p,q)$ with $a$ odd.}
%     \label{fig:normal form of 2-plat 2-knot with a odd}
% \end{figure}

\begin{lemma}\label{2-plat 2-knots modulo p}
    $F(p,q)$ and $F(p,q')$ are equivalent if $q \equiv q' \pmod{p}$.
\end{lemma}

\begin{proof}
    It suffices to show that $F(p,q)$ and $F(p,q+p)$ are equivalent.
    Let $[c_1,\dots,c_m]$ be the canonical continued fraction expansion of $q/p$, and let $q' = q + p$.
    We remark that $q$ and $q'$ have opposite parity.

    First, we consider the case $p < |q|$.
    Then, we have $c_1 = 0$.
    Hence, $F(p,q)$ has a ribbon presentation as shown in \Cref{fig:NF-2plat2KnotGeneral}.
    Thus, in this case, we see that $F(p,q)$ and $F(p,q')$ are equivalent by comparing their ribbon presentations.
    Similarly, $F(p,q)$ and $F(p,q')$ are equivalent if $p < |q'|$ holds.

    \begin{figure}[h]
        \centering
        \includegraphics[height = 17mm]{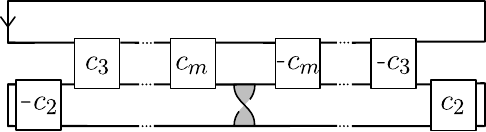}
        \caption{A ribbon presentation of $F(p,q)$ with $p < |q|$.}
        \label{fig:NF-2plat2KnotGeneral}
    \end{figure}

    Therefore, we may consider the case $|q|, |q'| < p$.
    Since $q' = q + p$, we have $-p < q < 0 < q' < p$.
    Then, the canonical continued fraction expansion of $q'/p$ is described in terms of the $c_i$ as follows.
    \begin{enumerate}
        \item If $c_1 \geq 2$ and $c_m \geq 2$, then
        \[
        q'/p=[1,\ 1-c_1,\ -c_2,\ \dots,\ -c_{m-1},\ 1-c_m,\ 1].
        \]
        \item If $c_1 \geq 2$ and $c_m = 1$, then
        \[
        q'/p=[1,\ 1-c_1,\ -c_2,\ \dots,\ -c_{m-2},\ -c_{m-1}-1].
        \]
        \item If $c_1 = 1$ and $c_m \geq 2$, then
        \[
        q'/p=[-c_2-1,\ -c_3,\ \dots,\ -c_{m-1},\ 1-c_m,\ 1].
        \]
        \item If $c_1 = c_m = 1$, then
        \[
        q'/p=[-c_2-1,\ -c_3,\ \dots,\ -c_{m-2},\ -c_{m-1}-1].
        \]
    \end{enumerate}
    In each case, it is straightforward to check that the ribbon presentation of $F(p,q')$ given in \Cref{fig:normal form of 2-plat 2-knot with a odd} or \Cref{fig:normal form of 2-plat 2-knot with a even} is isotopic to that of $F(p,q)$.
    For instance, \Cref{fig:RP-2plat2Knot4} is a ribbon presentation of $F(p,q')$ in case (1) with $q$ even.
    The other cases are similar, and we omit the details.
\end{proof}

\begin{figure}[h]
    \centering
    \includegraphics[height = 17mm]{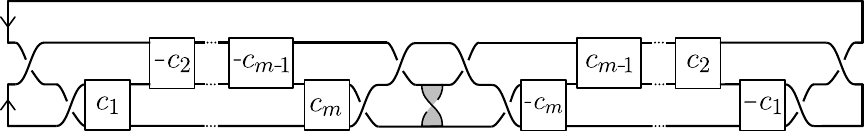}
    \caption{A ribbon presentation of $F(p, q+p)$ with $q$ even.}
    \label{fig:RP-2plat2Knot4}
\end{figure}

\begin{lemma}\label{Mirror image of 2-plat 2-knots}
    $F(p,-q)$ and $-F(p,q)$ are equivalent.
\end{lemma}

\begin{proof}
    Let $(L, B)$ be a ribbon presentation of $F(p,q)$ as shown in \Cref{fig:Intro-1}.
    Let $(L', B')$ be the mirror image of $(L, B)$.
    Then, the sign of each crossing is changed, so $(L',B')$ is a ribbon presentation of $F(p,-q)$.
    Since every ribbon surface-link is negatively amphicheiral, $F(p,-q)$ and $-F(p,q)$ are equivalent.
\end{proof}

\begin{proposition}\label{oriented 2-plat 2-knot has a normal form}
    Every oriented 2-plat 2-knot is equivalent, as an oriented 2-knot, to $F(p,q)$ for some coprime $p,q \in \Z$ with $p > 0$ odd.
\end{proposition}

\begin{proof}
    Let $F$ be an (oriented) 2-plat 2-knot and let $S$ be a 2-dimensional 4-braid such that $F$ is equivalent to $\tilde{S}$ as unoriented 2-knots.
    By \Cref{Ribbon presentation of plat closure}, $F$ has a ribbon presentation as shown on the right-hand side of \Cref{fig:MainProof1} for some geometric 4-braid $\beta$.
    We divide this ribbon presentation into three parts: a 2-strand trivial tangle $T$, four parallel strands with a single band, and the mirror image of $T$.
    See the left-hand side of \Cref{fig:MainProof1}.
    \begin{figure}[h]
        \centering
        \includegraphics[height = 17mm]{fig/MainProof1.pdf}
        % \vspace{17mm}
        \caption{A decomposition of a ribbon presentation.}
        \label{fig:MainProof1}
    \end{figure}
    Since a 2-strand trivial tangle $T$ is equivalent to a rational tangle $T(p,q)$ for some coprime $p,q \in \Z$ with $p > 0$, $F$ is equivalent to a 2-plat 2-knot $F(p,q)$ as unoriented 2-knots.
    In other words, $F$ is equivalent to $F(p,q)$ or $-F(p,q)$ as oriented 2-knots.
    Since $-F(p,q)$ and $F(p,-q)$ are equivalent by \Cref{Mirror image of 2-plat 2-knots}, $F$ is equivalent to $F(p,q)$ for some coprime $p,q \in \Z$ with $p > 0$.
    Since $F$ is a 2-knot, $p$ must be odd.
\end{proof}

\begin{proof}[Proof of \Cref{Main: normal form for 2-plat 2-knot}]
    By definition, $F(p,q)$ is the plat closure of a 2-dimensional 4-braid.
    Thus, $F(p,q)$ is an oriented 2-plat 2-knot.
    Conversely, every oriented 2-plat 2-knot is equivalent to some $F(p,q)$ with $p$ odd by \Cref{oriented 2-plat 2-knot has a normal form}.
\end{proof}

\section{Alexander polynomials of 2-plat 2-knots and the invariant $\tau(F)$}\label{Section: Alexander polynomials of 2-plat 2-knots}
% In this section, we prove \Cref{Main: Formula of Alexander polynomials for 2-plat 2-knots} using ribbon types of oriented ribbon 2-knots of 1-fusion and prove \Cref{Main:Formula of tau invariant}.

In this section, we determine the ribbon types of 2-plat 2-knots (\Cref{Ribbon types of 2-plat 2-knots}), and we use them to compute their Alexander polynomials (\Cref{Main: Formula of Alexander polynomials for 2-plat 2-knots}) and the invariant $\tau(F)$ (\Cref{Main:Formula of tau invariant}).

\subsection{Ribbon types of ribbon 2-knots of 1-fusion}\label{Subsection: Ribbon types of ribbon 2-knots of 1-fusion}
Any oriented ribbon 2-knot $F$ of 1-fusion can be represented by a finite sequence of integers $p_1, q_1, p_2, q_2, \dots, p_n, q_n$ ($n \geq 1$); such a sequence is called a \textit{ribbon type} and denoted by $R(p_1, q_1, \dots, p_n, q_n)$.
We now recall the definition of ribbon types due to Marumoto \cite{Marumoto1977}.

Let $(L, B)$ be a ribbon presentation of $F$ consisting of an oriented 2-component link $L = L_0 \cup L_1$ and a single oriented band $B$.
Let $D = D_0 \cup D_1$ be a union of disjoint 2-disks in $\R^3$ such that $\partial D_i = L_i$ ($i = 0, 1$) and $D$ intersects with $B$ transversely at finitely many points.
We assign to $D$ the orientation induced by the orientation of $L$.

Let $\alpha_B: I \to \R^3$ be an oriented path whose image is the core of $B$, with $\alpha_B(0) \in L_0$ and $\alpha_B(1) \in L_1$.
% We assume that $\alpha_B(I)$ intersects with $D$ at finitely many points.
Take a partition $0 = s_0 < r_0 < s_1 < r_1 < s_2 < r_2 < \cdots < s_n < r_n < s_{n+1} = 1$ of $I$ such that for each $i = 0, 1, \dots, n$,
\begin{align*}
    \alpha_B([r_i, s_{i+1}]) \cap D_0 = \emptyset, \quad \alpha_B([s_i, r_i]) \cap D_1 = \emptyset.
\end{align*}
\begin{figure}[h]
    \centering
    \includegraphics[height = 25mm]{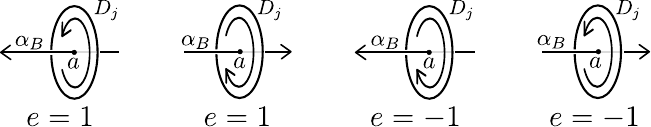}
    % \vspace{25mm}
    \caption{Local models around an intersection point $a$ and their signs $e$.}
    \label{fig:Sign-Intersection}
\end{figure}
For an intersection point $a$ of $\alpha_B(I)$ with $D$, we define the sign $e$ of $a$ as shown in \Cref{fig:Sign-Intersection}.
For $i = 1,2, \dots, n$, we define $p_i$ to be the sum of the signs of intersection points in $\alpha_B([r_{i-1}, s_i]) \cap D_0$ and $q_i$ to be the sum of the signs of intersection points in $\alpha_B([s_i, r_i]) \cap D_1$.

It is known that two ribbon 2-knots of 1-fusion with the same ribbon type are equivalent.
For any $p_i, q_i \in \Z$ ($i = 1,\dots, n$), there exists a ribbon 2-knot of 1-fusion with ribbon type $R(p_1, q_1, \dots, p_n, q_n)$.
We remark that a ribbon 2-knot can have several ribbon types \cite{Kanenobu-Takahashi2021,Yasuda2009}.
% Using ribbon types, we can describe Alexander polynomials of ribbon 2-knots of 1-fusion:

\begin{proposition}[\cite{Kinoshita1961,Marumoto1977}]\label{Formula of Alexander polynomial for ribbon 2-knots}
    Let $F$ be a ribbon 2-knot of 1-fusion with ribbon type $R(p_1, q_1, \dots, p_n, q_n)$.
    Then
    \begin{align*}
        \Delta_F(t) ~\doteq~ 1 + \sum_{i= 1}^n\left( -t^{\, a(i)} + t^{\, b(i)} \right) ~=~ 1 - t^{p_1} + t^{p_1 + q_1} - t^{p_1 + q_1 + p_2} + \cdots - t^{p_1 + q_1 + \cdots + p_n} + t^{p_1 + q_1 + \cdots + p_n + q_n},
    \end{align*}
    where $a(i) = p_i + \sum_{k = 1}^{i-1} (p_k + q_k)$, $b(i) = \sum_{k=1}^i \left( p_k + q_k\right)$, and $f(t) \doteq g(t)$ means that $f(t) = \pm t^k g(t)$ for some $k \in \Z$.
    %  in other word, their ideals $(f) = (g)$.
\end{proposition}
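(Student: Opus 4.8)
The plan is to compute $\Delta_F(t)$ by free differential (Fox) calculus, starting from the group presentation of \Cref{Presetation of knot group of ribbon 2-knots}. Write $G = \langle x, y \mid r \rangle$ with $r = w^{-1} y w x^{-1}$ and $w = x^{p_1} y^{q_1} \cdots x^{p_n} y^{q_n}$, and let $\phi$ be the ring homomorphism from the group ring of the free group on $x, y$ to $\Z[t, t^{-1}]$ induced by $x \mapsto t$, $y \mapsto t$ (the generators $x, y$ are meridional, hence both map to the generator of $H_1(G) \cong \Z$). Since this presentation has deficiency one, the Alexander module $H_1$ of the infinite cyclic cover of $\R^4 \setminus F$ is computed from the $1 \times 2$ Alexander matrix $\bigl(\phi(\partial r/\partial x),\ \phi(\partial r/\partial y)\bigr)$; by the fundamental identity $r - 1 = (\partial r/\partial x)(x-1) + (\partial r/\partial y)(y-1)$ applied with $\phi$ (note $\phi(r) = 1$, as $r$ is a relator), the two entries satisfy $\phi(\partial r/\partial x) = -\phi(\partial r/\partial y)$, so the first elementary ideal is principal and $\Delta_F(t) \doteq \phi(\partial r/\partial y)$. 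For a ribbon $2$-knot the complement is homotopy equivalent to the presentation $2$-complex, which is what makes this legitimate; this reduction is the content of \cite{Kinoshita1961,Marumoto1977}.

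Next I would carry out two short Fox-derivative computations. First, from $r = w^{-1} y w x^{-1}$ the product and inverse rules give
\[
\frac{\partial r}{\partial y} \;=\; w^{-1} + w^{-1}(y - 1)\,\frac{\partial w}{\partial y},
\]
and applying $\phi$, using $\phi(w) = t^{N}$ with $N = \sum_{i=1}^{n}(p_i + q_i)$, yields
\[
\phi\!\left(\frac{\partial r}{\partial y}\right) \;=\; t^{-N}\Bigl(1 + (t-1)\,\phi\!\left(\frac{\partial w}{\partial y}\right)\Bigr) \;\doteq\; 1 + (t-1)\,\phi\!\left(\frac{\partial w}{\partial y}\right).
\]
Second, setting $w_k = x^{p_1} y^{q_1} \cdots x^{p_k} y^{q_k}$ with $w_0 = 1$, and using $\partial(x^{p})/\partial y = 0$ together with $\partial(y^{q})/\partial y = (y^{q}-1)/(y-1)$ (valid for every $q \in \Z$), the iterated product rule gives $\partial w/\partial y = \sum_{k=1}^{n} w_{k-1}\, x^{p_k}\, \partial(y^{q_k})/\partial y$, so
\[
\phi\!\left(\frac{\partial w}{\partial y}\right) \;=\; \sum_{k=1}^{n} t^{\,a(k)}\,\frac{t^{q_k} - 1}{t - 1},
\]
because $\phi(w_{k-1} x^{p_k}) = t^{\,a(k)}$ with $a(k) = p_k + \sum_{j=1}^{k-1}(p_j+q_j)$.

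Finally I would substitute and simplify: multiplying through by $t - 1$ gives $(t-1)\,\phi(\partial w/\partial y) = \sum_{k=1}^{n}\bigl(t^{\,a(k)+q_k} - t^{\,a(k)}\bigr) = \sum_{k=1}^{n}\bigl(t^{\,b(k)} - t^{\,a(k)}\bigr)$, since $a(k) + q_k = b(k)$, whence
\[
\Delta_F(t) \;\doteq\; 1 + \sum_{k=1}^{n}\bigl(-t^{\,a(k)} + t^{\,b(k)}\bigr);
\]
expanding the exponents $a(1) = p_1$, $b(1) = p_1 + q_1$, $a(2) = p_1 + q_1 + p_2$, and so on recovers the second displayed form in the statement. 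The only step that is not pure bookkeeping is the reduction in the first paragraph — that for this deficiency-one presentation $\Delta_F$ really is $\phi(\partial r/\partial y)$ up to units; after that, the main place a careless computation could go astray is in handling the uniform formula for $\partial(y^q)/\partial y$ and the sign conventions when some $p_k$ or $q_k$ is negative, which should be checked explicitly.
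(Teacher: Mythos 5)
Your computation is correct, and it is the standard Fox-calculus derivation; the paper itself offers no proof of this proposition, stating it as a known result with citations to Kinoshita and Marumoto, and your argument is essentially the one found in those sources: take the presentation of \Cref{Presetation of knot group of ribbon 2-knots}, differentiate the single relator, use the fundamental identity to see the $1\times 2$ Jacobian has equal entries up to sign, and evaluate $\phi(\partial w/\partial y)$ termwise (your uniform formula $\phi(\partial(y^q)/\partial y)=(t^q-1)/(t-1)$ for all $q\in\Z$, including negative exponents, is the right way to avoid case splitting).

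One small caveat: the justification you give for the reduction --- that the complement of a ribbon 2-knot is homotopy equivalent to the presentation 2-complex --- is both stronger than needed and not something you should assert without proof. What makes the Fox-calculus computation legitimate is simply that the Alexander module $H_1$ of the infinite cyclic cover is determined by the knot group together with its abelianization (since $\pi_1$ of the infinite cyclic cover is the kernel of $\pi\to\Z$), so it can be computed from any finite presentation whose generators are meridional; the splitting $H_1(\widetilde{X},\widetilde{p})\cong H_1(\widetilde{X})\oplus\Lambda$ then identifies $\Delta_F$ with the generator of the (principal) ideal of $1\times 1$ minors, exactly as you use it. With that substitution the argument is complete.
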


\begin{theorem}[\cite{Kinoshita1961}]\label{fact:Kinoshita1961}
    For any Laurent polynomial $f(t) \in \Z[t^{\pm1}]$ with $|f(1)| = 1$, there exists a ribbon 2-knot $F$ of 1-fusion such that $\Delta_F(t) \doteq f(t)$.
\end{theorem}

\begin{example}\label{fact:Kanenobu-Sumi2018}
    Let $F_n$ ($n \geq 0$) be a ribbon 2-knot of 1-fusion with ribbon type $R(1, n, -n-1, 1)$.
    Then, $\Delta_{F_n}(t) \doteq 1$ for any $n\geq 0$.
    Moreover, Kanenobu and Sumi \cite{Kanenobu-Sumi2018} proved that $F_n$ is equivalent to $F_m$ if and only if $n = m$.
    Therefore, there exist infinitely many ribbon 2-knots of 1-fusion with trivial Alexander polynomial.
\end{example}

The aim of this subsection is to prove the following theorem:

\begin{theorem}\label{Ribbon types of 2-plat 2-knots}
    Let $p$ and $q$ be coprime integers with $p$ odd and $q$ even.
    Then, $F(p,q)$ has a ribbon type
    \begin{align*}
        R(\varepsilon_1, \dots, \varepsilon_{p-1}), \quad \varepsilon_i = (-1)^{\lfloor iq/p \rfloor}.
    \end{align*}
\end{theorem}

\Cref{Main: Formula of Alexander polynomials for 2-plat 2-knots} follows immediately from \Cref{Formula of Alexander polynomial for ribbon 2-knots} and \Cref{Ribbon types of 2-plat 2-knots}.

\begin{proof}[Proof of \Cref{Ribbon types of 2-plat 2-knots}]
    Let $F(p,q)$ be a 2-plat 2-knot with $p$ odd and $q$ even, and let $\varepsilon_i = (-1)^{\lfloor iq/p \rfloor}$.
    We may replace $q$ by $q' = q + kp$ for some odd integer $k$, so that $q'$ is odd and satisfies $0 < |q'| < p$.
    By \Cref{2-plat 2-knots modulo p}, $F(p,q)$ is equivalent to $F(p,q')$.

    Put $\delta_i = (-1)^{\lfloor iq'/p \rfloor}$.
    Then, we have
    \begin{align*}
        \delta_i = (-1)^{\lfloor iq'/p \rfloor} ~=~ (-1)^{\lfloor ik + (iq/p) \rfloor} ~=~ (-1)^i(-1)^{\lfloor iq/p \rfloor} ~=~ (-1)^i \varepsilon_i.
    \end{align*}

    In the following, we use some notation introduced in the proof of \Cref{Relationship between Schubert and Conway normal forms}.
    Let $[c_1, \dots, c_m]$ be the canonical continued fraction expansion of $q'/p$, and let $\beta_c$ be the geometric 4-braid in $\R^2 \times [0,1]$.
    Let $\beta_c^{-1}$ be the mirror image of $\beta_c$ about the plane $\R^3 \times \{0\}$, which is a geometric 4-braid in $\R^2 \times [-1,0]$.
    We define $(L, B)$ as a ribbon presentation shown in the left-hand side of \Cref{fig:RP-2plat2knot}, where $L$ is the oriented plat closure of $\beta_c \cup \beta_c^{-1}$ and $B$ is a single band whose core lies in the $xy$-plane.
    By \Cref{Ribbon presentation of plat closure}, $(L, B)$ is a ribbon presentation of $F(p,q')$.

    Let $\{ \varPhi_t\}_{t \in I}$ be the isotopy of $\R^3$ defined in the proof of \Cref{Relationship between Schubert and Conway normal forms}.
    Define an isotopy $\varPsi_t: \R^3 \to \R^3$ ($t \in I$) by $\varPsi_t = \varPhi_{1-t} \circ \varPhi^{-1}_1$.
    Then, $(L', B') = (\varPsi_1(L), \varPsi_1(B))$ is a new ribbon presentation of $F(p,q')$ with the following properties:
    % Applying $\varPsi$, we deform $(L, B)$ into a new ribbon presentation $(L', B')$ with the following properties:
    \begin{itemize}
        \item $L' = L_0 \cup L_1$ is embedded in the yz-plane $\{0\} \times \R^2$,
        \item the orientation of each component of $L'$ is counter-clockwise, and
        \item the core of $B'$ is an under-path of $K(p,q')$.
    \end{itemize}
    See \Cref{fig:RP-2plat2knot}, where the middle of \Cref{fig:RP-2plat2knot} shows the case of $(p,q') = (3,1)$, and the right-hand side of \Cref{fig:RP-2plat2knot} shows that of $(p,q') = (7,3)$.

    \begin{figure}[h]
        \centering
        \begin{minipage}{0.3\hsize}
            \centering
            \includegraphics[height = 40mm]{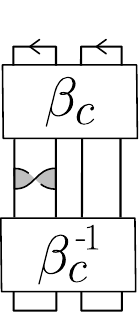}
        \end{minipage}
        \begin{minipage}{0.3\hsize}
            \centering
            \includegraphics[height = 40mm]{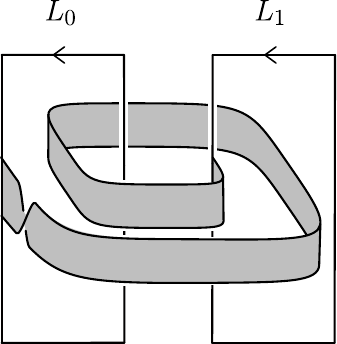}
        \end{minipage}
        \begin{minipage}{0.3\hsize}
            \centering
            \includegraphics[height = 40mm]{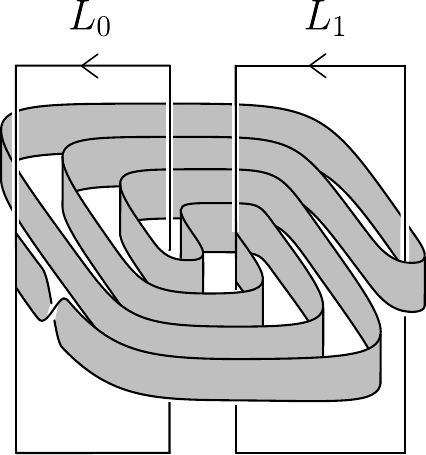}
        \end{minipage}
        \caption{\textbf{Left}: A ribbon presentation $(L, B)$ of $F(p,q')$, \textbf{Middle}: A ribbon presentation $(L', B')$ of $F(3,1)$, \textbf{Right}: A ribbon presentation $(L', B')$ of $F(7,3)$.}
        \label{fig:RP-2plat2knot}
    \end{figure}

    Let $D = D_0 \cup D_1$ be the union of 2-disks in the yz-plane with $\partial D_i = L_i$.
    Let $\alpha_{B'}: I \to \R^3$ be an oriented path such that $\alpha_{B'}(0) \in L_0$, $\alpha_{B'}(1) \in L_1$, and $\alpha_{B'}(I)$ is the core of $B'$.
    % By the orthogonal projection, $D$ is projected to $w_2 \times \{0\}$ in the $xy$-plane.
    Then, the intersection points $\alpha_{B'}(I) \cap D$ correspond to the crossings in the diagram $D(p,q')$ of $K(p,q')$.
    Let $e_i$ be the sign of the $i$-th intersection point on $\alpha_{B'}(I)$.
    Then, we have
    \begin{align*}
        e_i ~=~ (-1)^i \delta_i ~=~ \varepsilon_i.
    \end{align*}
    Since $\alpha_{B'}(I)$ passes through $D_0$ and $D_1$ alternately, $F(p,q')$ has a ribbon type $R(\varepsilon_1, \dots, \varepsilon_{p-1})$.
\end{proof}

\subsection{The invariant $\tau(F)$ of surface-links}\label{Section:tau invariant}

In this subsection, we introduce two invariants $a(F)$ and $\tau(F)$ of oriented surface-links.
\begin{definition}
    Let $F$ be an oriented surface-link, and let $\Delta_F(t) = \sum_{k \in \Z} a_kt^k$ be the Alexander polynomial of $F$.
    We define $a(F)$ and $\tau(F)$ as
    \begin{align*}
        a(F) ~=~ \sum_{k \in \Z} |a_k|, \quad \tau(F) ~=~ \sum_{k \in \Z} k \, |a_k| \pmod{a(F)}.
    \end{align*}
\end{definition}

\begin{lemma}\label{well-definedness of tau}
    $a(F)$ and $\tau(F)$ are well-defined invariants of oriented surface-links.
\end{lemma}

\begin{proof}
    Let $F$ and $F'$ be equivalent surface-links.
    Then, we have $\Delta_F(t) \doteq \Delta_{F'}(t)$, that is, $\Delta_{F'}(t) = \varepsilon t^{n}\Delta_{F}(t)$ for some $n \in \Z$ and $\varepsilon \in \{\pm 1\}$.
    In other words, $\Delta_{F'}(t) =  \sum_{k \in \Z} \varepsilon a_kt^{k+n}$.

    It is immediate from the definition that $a(F)$ and $a(F')$ coincide.
    Thus, we have
    % By definition, $\tau(F') \equiv \sum_{k \in \Z} (k+n)\, |\varepsilon a_k| \pmod{a(F')}$.
    % Hence, we have
    \begin{align*}
        \tau(F') ~&=~ \sum_{k \in \Z} (k+n) \, |\varepsilon a_k| ~=~
        \sum_{k \in \Z} k \, |a_k| + \sum_{k \in \Z} n \, |a_k| ~=~
        \sum_{k \in \Z} k \, |a_k| + n \sum_{k \in \Z} |a_k|\\
        &=~ \tau(F) + na(F) ~\equiv~ \tau(F) \pmod{a(F)}.
    \end{align*}
\end{proof}

We now prove \Cref{Main:Formula of tau invariant}.

\begin{proof}[Proof of \Cref{Main:Formula of tau invariant}]
    Let $F=F(p,q)$ be an oriented 2-plat 2-knot, where $p > 0$ is odd and $q$ is even, and let $\Delta_F(t) = \sum_{k \in \Z} a_kt^k$ be the Alexander polynomial of $F$.
    If $q = 0$, then we have $p = 1$ and $\tau(F(1,0)) = 0$ by definition.
    Thus, we only consider the case $|q| > 1$.

    The proof consists of two steps:
    First, we prove $a(F) = p$. Second, we rewrite $\tau(F)$ in terms of the signs $\varepsilon_i$ and evaluate the resulting sum modulo $p$.

    \begin{claim}\label{claim1}
        We have $a(F) = p$.
    \end{claim}

    \begin{proof}[Proof of \Cref{claim1}]
        By \Cref{Main: Formula of Alexander polynomials for 2-plat 2-knots}, $\Delta_F(t)$ is
        \begin{align*}
            \Delta_F(t) ~\doteq~ 1 + \sum_{i = 1}^{p-1} (-1)^i t^{\, d(i)}, \quad d(i) ~=~ \sum_{l=1}^{i} \varepsilon_l ~=~ \sum_{l=1}^i (-1)^{\lfloor lq/p\rfloor}.
        \end{align*}
        Since $d(i) \equiv i \pmod 2$, we have $a_k = (-1)^k|a_k|$.
        Thus, we have
        \begin{align*}
            \Delta_F(t) ~\doteq~ 1 + \sum_{i = 1}^{p-1} (-1)^{i} t^{\,d(i)} ~=~ \sum_{k \in \Z} |a_k| (-1)^k t^{\, k}.
        \end{align*}
        By \Cref{Determinant of 2-plat 2-knot}, we have
        \begin{align*}
            p ~=~ |\Delta_F(-1)| ~=~ \left| \sum_{k \in \Z} |a_k| (-1)^k (-1)^k \right| ~=~ \sum_{k \in \Z} |a_k| ~=~ a(F).
        \end{align*}
    \end{proof}

    \begin{claim}\label{claim2}
        We have $\tau(F(p,q)) ~\equiv~ - \sum_{i=1}^{p-1} i\, \varepsilon_i \pmod p$.
    \end{claim}

    \begin{proof}[Proof of \Cref{claim2}]
        % Let $N_k = \left\{ i \mid d(i) = k, i \in \{ 1\leq i \leqp-1 \right\}$, where $k \in \Z$.
        Let $N_k = \left\{ i \mid d(i) = k \right\} \subset \{1,2, \dots, p-1\}$, where $k \in \Z$.
        Since $d(i) \equiv i \pmod 2$, we have $|a_k| = |N_k|$ for each $k$.
        In addition, $N_k \cap N_{k'} = \emptyset$ for $k \neq k'$.
        Thus, we have
        \begin{align*}
            \tau(F) ~&=~
            \sum_{k \in \Z} k \, |a_k| ~=~
            \sum_{k \in \Z} \sum_{i \in N_k} k ~=~
            \sum_{k \in \Z} \sum_{i \in N_k} d(i) ~=~
            \sum_{i=1}^{p-1} d(i)\\
            ~&=~
            \sum_{i=1}^{p-1}\sum_{l=1}^{i} \varepsilon_l ~=~
            \sum_{i=1}^{p-1} (p-i) \varepsilon_i ~\equiv~
            -\sum_{i=1}^{p-1} i\, \varepsilon_i \pmod p.
        \end{align*}
    \end{proof}

    We continue the proof of \Cref{Main:Formula of tau invariant}.
    Write $q=2q_0$ and $iq_0 \equiv r_i \pmod p$ with $0\le r_i\le p-1$.
    Then, we have
    \[
        \varepsilon_i ~=~ (-1)^{\lfloor iq/p\rfloor}~=~ (-1)^{\lfloor 2r_i/p \rfloor}
        ~=~
        \begin{cases}
            +1 & 1 \leq r_i \leq (p-1)/2,\\
            -1 & (p+1)/2 \leq r_i \leq p-1.
        \end{cases}
    \]
    Define $\chi: \{ 1,2,\dots,p-1 \} \to \{ \pm1 \}$ by
    \[
        \chi(r) ~=~
        \begin{cases}
            +1 & 1\le r\le (p-1)/2,\\
            -1 & (p+1)/2\le r\le p-1.
        \end{cases}
    \]
    Then, we have $\varepsilon_i = \chi(r_i)$.
    In addition, we have $\{r_i \mid i = 1,\dots, p-1\} = \{1, 2, \dots, p-1\}$ since $\gcd(p, q_0) = 1$.
    Hence
    \begin{align*}
        -q_0 \tau(F) ~&=~
        q_0 \sum_{i=1}^{p-1} i\,\varepsilon_i ~=~
        \sum_{i=1}^{p-1} i q_0 \,\chi(r_i) ~\equiv~
        \sum_{i=1}^{p-1} r_i\,\chi(r_i) \pmod p\\
        ~&=~
        \sum_{r=1}^{p-1} r\,\chi(r) ~=~
        \sum_{r=1}^{(p-1)/2} r - \sum_{r=(p+1)/2}^{p-1} r ~=~
        - \frac{(p-1)^2}{4} ~\equiv~
        -4^{-1} \pmod p.
    \end{align*}
    Therefore, we have $\tau(F) \equiv (4q_0)^{-1} = (2q)^{-1} \pmod p$.
\end{proof}

\appendix
\section{A table of 2-plat 2-knots with small determinant}

In \Cref{table: 2-plat 2-knots}, we list the 2-plat 2-knots $F(p,q)$ with $p \leq 19$.
By \Cref{2-plat 2-knots modulo p}, every non-trivial 2-plat 2-knot has a normal form $F(p,q)$ with $0 < |q| < p$ and $q$ even.
In addition, we remove $F(p,q)$ with $q < 0$ from \Cref{table: 2-plat 2-knots} because $F(p,q)$ is the mirror image of $F(p,-q)$.
Each column in \Cref{table: 2-plat 2-knots} is as follows:
\begin{itemize}
    \item The column, $q/p$, shows the rational number $q/p$ of a normal form $F(p,q)$.
    \item The column, Ribbon type, shows the ribbon type of $F(p,q)$ as determined in \Cref{Ribbon types of 2-plat 2-knots}.
    \item The column, $\Delta(t)$, shows the normalized Alexander polynomial of $F(p,q)$.
    Here, the Alexander polynomial $\Delta(t)$ is said to be \textit{normalized} if it satisfies $\Delta(1) = 1$ and $(d/dt)\Delta(1) = 0$.
    We abbreviate $\Delta(t)$ as follows: $(a_m, a_{m+1}, \dots, [a_0], a_1, \dots, a_n) = \sum_{i=m}^n a_i t^{\, i}$.
    \item In the last column, the symbol $\approx K$ indicates that $F(p,q)$ is equivalent to a ribbon 2-knot $K$ classified by Kanenobu and Takahashi \cite{Kanenobu-Takahashi2021}.
\end{itemize}

\bibliographystyle{plain}
\bibliography{reference.bib}

\begin{table}[h]
\begin{tabular}{rlcc}\hline
$a/p$ & Ribbon type & $\Delta(t)$\\ \hline
0/1  &  $R (0, 0) $ (Trivial 2-knot) & $ ([1]) $ & $\approx 0\_1$ \\
\\
2/3  &  $R (1, -1) $ & $ ([0], 2, -1) $ & $\approx 2\_2$ \\
\\
2/5  &  $R (1, 1, -1, -1) $ & $ ([2], -2, 1) $ & $\approx 4\_8$ \\
4/5  &  $R (1, -1, 1, -1) $ & $ ([0], 0, 3, -2) $ & $\approx 4\_9$ \\
\\
2/7  &  $R (1, 1, 1, -1, -1, -1) $ & $ ([0], 2, -2, 2, -1) $ & $\approx 6\_64$ \\
4/7  &  $R (1, -1, -1, 1, 1, -1) $ & $ ([-1], 4, -2) $ & $\approx 6\_69$ \\
6/7  &  $R (1, -1, 1, -1, 1, -1) $ & $ ([0], 0, 0, 4, -3) $ & $\approx 6\_68$ \\
\\
2/9  &  $R (1, 1, 1, 1, -1, -1, -1, -1) $ & $ ([2], -2, 2, -2, 1) $ &  \\
4/9  &  $R (1, 1, -1, -1, 1, 1, -1, -1) $ & $ ([3], -4, 2) $ &  \\
8/9  &  $R (1, -1, 1, -1, 1, -1, 1, -1) $ & $ ([0], 0, 0, 0, 5, -4) $ &  \\
\\
2/11  &  $R (1, 1, 1, 1, 1, -1, -1, -1, -1, -1) $ & $ ([0], 2, -2, 2, -2, 2, -1) $ &  \\
4/11  &  $R (1, 1, -1, -1, -1, 1, 1, 1, -1, -1) $ & $ (-1, 4, [-4], 2) $ &  \\
6/11  &  $R (1, -1, -1, 1, 1, -1, -1, 1, 1, -1) $ & $ ([-2], 6, -3) $ &  \\
8/11  &  $R (1, -1, 1, 1, -1, 1, -1, -1, 1, -1) $ & $ ([0], 4, -5, 2) $ &  \\
10/11  &  $R (1, -1, 1, -1, 1, -1, 1, -1, 1, -1) $ & $ ([0], 0, 0, 0, 0, 6, -5) $ &  \\
\\
2/13  &  $R (1, 1, 1, 1, 1, 1, -1, -1, -1, -1, -1, -1) $ & $ ([2], -2, 2, -2, 2, -2, 1) $ &  \\
4/13  &  $R (1, 1, 1, -1, -1, -1, 1, 1, 1, -1, -1, -1) $ & $ ([0], 0, 3, -4, 4, -2) $ &  \\
6/13  &  $R (1, 1, -1, -1, 1, 1, -1, -1, 1, 1, -1, -1) $ & $ ([4], -6, 3) $ &  \\
8/13  &  $R (1, -1, -1, 1, -1, -1, 1, 1, -1, 1, 1, -1) $ & $ (1, -4, [6], -2) $ &  \\
10/13  &  $R (1, -1, 1, -1, -1, 1, -1, 1, 1, -1, 1, -1) $ & $ ([0], -2, 7, -4) $ &  \\
12/13  &  $R (1, -1, 1, -1, 1, -1, 1, -1, 1, -1, 1, -1) $ & $ ([0], 0, 0, 0, 0, 0, 7, -6) $ &  \\
\\
2/15  &  $R (1, 1, 1, 1, 1, 1, 1, -1, -1, -1, -1, -1, -1, -1) $ & $ ([0], 2, -2, 2, -2, 2, -2, 2, -1) $ &  \\
4/15  &  $R (1, 1, 1, -1, -1, -1, -1, 1, 1, 1, 1, -1, -1, -1) $ & $ ([-1], 4, -4, 4, -2) $ &  \\
8/15  &  $R (1, -1, -1, 1, 1, -1, -1, 1, 1, -1, -1, 1, 1, -1) $ & $ ([-3], 8, -4) $ &  \\
14/15  &  $R (1, -1, 1, -1, 1, -1, 1, -1, 1, -1, 1, -1, 1, -1) $ & $ ([0], 0, 0, 0, 0, 0, 0, 8, -7) $ &  \\
\\
2/17  &  $R (1, 1, 1, 1, 1, 1, 1, 1, -1, -1, -1, -1, -1, -1, -1, -1) $ & $ ([2], -2, 2, -2, 2, -2, 2, -2, 1) $ &  \\
4/17  &  $R (1, 1, 1, 1, -1, -1, -1, -1, 1, 1, 1, 1, -1, -1, -1, -1) $ & $ ([3], -4, 4, -4, 2) $ &  \\
6/17  &  $R (1, 1, -1, -1, -1, 1, 1, 1, -1, -1, -1, 1, 1, 1, -1, -1) $ & $ (-2, 6, -6, [3]) $ &  \\
8/17  &  $R (1, 1, -1, -1, 1, 1, -1, -1, 1, 1, -1, -1, 1, 1, -1, -1) $ & $ ([5], -8, 4) $ &  \\
10/17  &  $R (1, -1, -1, 1, 1, -1, 1, 1, -1, -1, 1, -1, -1, 1, 1, -1) $ & $ ([0], -2, 8, -6, 1) $ &  \\
12/17  &  $R (1, -1, 1, 1, -1, 1, 1, -1, 1, -1, -1, 1, -1, -1, 1, -1) $ & $ ([0], 0, 4, -6, 5, -2) $ &  \\
14/17  &  $R (1, -1, 1, -1, 1, 1, -1, 1, -1, 1, -1, -1, 1, -1, 1, -1) $ & $ ([0], 0, 6, -8, 3) $ &  \\
16/17  &  $R (1, -1, 1, -1, 1, -1, 1, -1, 1, -1, 1, -1, 1, -1, 1, -1) $ & $ ([0], 0, 0, 0, 0, 0, 0, 0, 9, -8) $ &  \\
\\
2/19  &  $R (1, 1, 1, 1, 1, 1, 1, 1, 1, -1, -1, -1, -1, -1, -1, -1, -1, -1) $ & $ ([0], 2, -2, 2, -2, 2, -2, 2, -2, 2, -1) $ &  \\
4/19  &  $R (1, 1, 1, 1, -1, -1, -1, -1, -1, 1, 1, 1, 1, 1, -1, -1, -1, -1) $ & $ (-1, 4, [-4], 4, -4, 2) $ &  \\
6/19  &  $R (1, 1, 1, -1, -1, -1, 1, 1, 1, -1, -1, -1, 1, 1, 1, -1, -1, -1) $ & $ ([0], 0, 0, 4, -6, 6, -3) $ &  \\
8/19  &  $R (1, 1, -1, -1, 1, 1, 1, -1, -1, 1, 1, -1, -1, -1, 1, 1, -1, -1) $ & $ ([0], 4, -7, 6, -2) $ &  \\
10/19  &  $R (1, -1, -1, 1, 1, -1, -1, 1, 1, -1, -1, 1, 1, -1, -1, 1, 1, -1) $ & $ ([-4], 10, -5) $ &  \\
12/19  &  $R (1, -1, -1, 1, -1, -1, 1, -1, -1, 1, 1, -1, 1, 1, -1, 1, 1, -1) $ & $ (-1, 4, [-6], 6, -2) $ &  \\
14/19  &  $R (1, -1, 1, 1, -1, 1, -1, -1, 1, -1, 1, 1, -1, 1, -1, -1, 1, -1) $ & $ ([0], 6, -9, 4) $ &  \\
16/19  &  $R (1, -1, 1, -1, 1, -1, -1, 1, -1, 1, -1, 1, 1, -1, 1, -1, 1, -1) $ & $ ([0], 0, -3, 10, -6) $ &  \\
18/19  &  $R (1, -1, 1, -1, 1, -1, 1, -1, 1, -1, 1, -1, 1, -1, 1, -1, 1, -1) $ & $ ([0], 0, 0, 0, 0, 0, 0, 0, 0, 10, -9) $ &  \\
\hline
\end{tabular}
\centering
\caption{2-plat 2-knots $F(a/p)$ with $p \leq 19$.}
\label{table: 2-plat 2-knots}
\end{table}
\end{document}